\providecommand{\U}[1]{\protect\rule{.1in}{.1in}}
\newtheorem{theorem}{Theorem}[section]
\newtheorem{lemma}[theorem]{Lemma}
\newtheorem{definition}{Definition}[section]
\theoremstyle{definition}
\theoremstyle{remark}
\numberwithin{equation}{section}
\let\pdfoutput=\undefined\fi
\begin{document}
\pagestyle{myheadings}

\begin{center}
{\huge \textbf{Sylvester-type quaternion matrix equations with arbitrary equations and arbitrary unknowns}}\footnote{Email address: zhuohenghe@shu.edu.cn, hzh19871126@126.com}

\bigskip

{ \textbf{Zhuo-Heng He}}

{\small
\vspace{0.25cm}

Department of Mathematics, Shanghai University, Shanghai 200444, P. R. China}

\end{center}

\begin{quotation}
\noindent\textbf{Abstract:}
In this paper, we prove a conjecture which was presented in a recent paper [Linear
Algebra Appl. 2016; 496: 549--593]. We derive some practical necessary and sufficient conditions for the existence of a solution to a system of coupled two-sided Sylvester-type quaternion matrix equations with arbitrary equations and arbitrary unknowns $A_{i}X_{i}B_{i}+C_{i}X_{i+1}D_{i}=E_{i},~i=\overline{1,k}$.
As an application, we give some practical necessary and sufficient conditions for the existence of an $\eta$-Hermitian solution to the system of quaternion matrix equations $A_{i}X_{i}A^{\eta*}_{i}+C_{i}X_{i+1}C^{\eta*}_{i}=E_{i}$ in terms of ranks, $~i=\overline{1,k}$.
\newline%
\noindent\textbf{Keywords:} Sylvester equation; Solvability£» Quaternion;
$\eta$-Hermitian\newline\noindent
\textbf{2010 AMS Subject Classifications:\ }{\small 15A09, 15A24, 15A57, 11R52}\newline
\end{quotation}

\section{\textbf{Introduction}}

In this paper, we consider the the system of coupled two-sided Sylvester-type quaternion matrix equations with $k$ equations and $k+1$ unknowns
\begin{align}\label{march15equ031}
\begin{cases}
&A_{1}X_{1}B_{1}+C_{1}X_{2}D_{1}=E_{1},\\
&\qquad \qquad \quad A_{2}X_{2}B_{2}+C_{2}X_{3}D_{2}=E_{2},\\
&\qquad \qquad \quad \qquad \qquad \quad A_{3}X_{3}B_{3}+C_{3}X_{4}D_{3}=E_{3},\\
&\qquad \qquad \quad \qquad \qquad\qquad\qquad\qquad\ddots\\
&\qquad \qquad \quad \qquad \qquad\qquad\qquad\qquad A_{k}X_{k}B_{k}+C_{k}X_{k+1}D_{k}=E_{k},
\end{cases}
\end{align}
where $A_{i},B_{i}, C_{i},D_{i},$ and $E_{i}$ are given matrices, $X_{i}$ are unknowns. Since Baksalary \cite{JKB2} first studied the system (\ref{march15equ031}) for the case $k=1$ over the field in 1980, there have been many papers to consider the case $k=1$ (e.g., \cite{A.B}, \cite{wqw01}). In 2016, He \emph{et.al} \cite{helaa} investigated a simultaneous decomposition to consider the system (\ref{march15equ031}) for the case $k=3.$ They established some necessary and sufficient conditions for the existence of a solution to the system (\ref{march15equ031}) in terms of ranks of the matrices involved \cite{helaa}. At the end of the paper \cite{helaa}, they gave a conjecture on the solvability condition to the system (\ref{march15equ031}) in terms of ranks for the case $k\geq4,$ see Theorem \ref{maintheorem}. Notice however, it is hard to solve the conjecture using the approach presented in \cite{helaa}.

In this paper, we use another approach to prove Theorem \ref{maintheorem}, i.e., the conjecture which proposed in \cite{helaa}.  We then consider solvability conditions to the system of quaternion matrix equations involving $\eta$-Hermicity
\begin{align}\label{applicationsystem}
\begin{cases}
&A_{1}X_{1}A^{\eta*}_{1}+C_{1}X_{2}C^{\eta*}_{1}=E_{1},\\
&\qquad \qquad \quad A_{2}X_{2}A^{\eta*}_{2}+C_{2}X_{3}C^{\eta*}_{2}=E_{2},\\
&\qquad \qquad \quad \qquad \qquad \quad A_{3}X_{3}A^{\eta*}_{3}+C_{3}X_{4}C^{\eta*}_{3}=E_{3},\\
&\qquad \qquad \quad \qquad \qquad\qquad\qquad\qquad\ddots\\
&\qquad \qquad \quad \qquad \qquad\qquad\qquad\qquad A_{k}X_{k}A^{\eta*}_{k}+C_{k}X_{k+1}C^{\eta*}_{k}=E_{k},
\end{cases}~X_{i}=X^{\eta*}_{i}.
\end{align}

The remainder of this paper is organized as follows. In Section 2, we give the main result of this paper. We derive some practical necessary and sufficient conditions for the existence of a solution to the system (\ref{march15equ031}). In Section 3, we give the proof of Theorem \ref{maintheorem}. In Section 4, we derive some practical necessary and sufficient conditions for the existence of an $\eta$-Hermitian solution to the system (\ref{applicationsystem}) in terms of ranks, see Theorem \ref{theorem061}.

Let $\mathbb{R}$ and $\mathbb{H}^{m\times n}$ stand, respectively, for the real field and the space of all $m\times n$ matrices over the real quaternion algebra
\[
\mathbb{H}%
=\big\{a_{0}+a_{1}\mathbf{i}+a_{2}\mathbf{j}+a_{3}\mathbf{k}\big|~\mathbf{i}^{2}=\mathbf{j}^{2}=\mathbf{k}^{2}=\mathbf{ijk}=-1,a_{0}%
,a_{1},a_{2},a_{3}\in\mathbb{R}\big\}.
\]
 The symbols $r(A)$ and $A^{\ast}$ stand for the rank of a given quaternion matrix $A$ and the conjugate transpose of $A$ and the transposed of $A$, respectively. $I$ and $0$ are the identity matrix and zero matrix with appropriate sizes, respectively. The Moore-Penrose inverse
$A^{\dag}$ of a quaternion matrix $A$, is defined to be the unique matrix $A^{\dag},$
such that
\begin{align*}
\text{(i)}~AA^{\dag}A=A,~\text{(ii)}~A^{\dag}AA^{\dag}=A^{\dag},~\text{(iii)}%
~(AA^{\dag})^{\ast}=AA^{\dag},~\text{(iv)}~(A^{\dag}A)^{\ast}=A^{\dag}A.
\end{align*}Furthermore, $L_{A}$ and $R_{A}$ stand for the projectors $L_{A}%
=I-A^{\dag}A$ and $R_{A}=I-AA^{\dag}$ induced by $A$, respectively. For more definitions and properties of quaternions, we refer the reader to the book \cite{rodman}.

\section{\textbf{The main result}}
 In this section, we give some practical necessary and sufficient conditions for the existence of a solution to the system (\ref{march15equ031}) in terms of ranks.
\begin{theorem}\label{maintheorem}
The system (\ref{march15equ031}) is consistent if and only if the following $2k(k+1)$ rank equalities hold for all $i=1,\ldots,k$ and $1\leq m< n\leq k$
\begin{align}\label{march15equ032}
r\begin{pmatrix}
A_{i}&E_{i}&C_{i}
\end{pmatrix}=r\begin{pmatrix}
A_{i}&C_{i}
\end{pmatrix},~
r\begin{pmatrix}
B_{i}\\E_{i}\\D_{i}
\end{pmatrix}=r\begin{pmatrix}B_{i}\\D_{i}\end{pmatrix},
\end{align}
\begin{align}\label{march15equ033}
r\begin{pmatrix}
A_{i}&E_{i}\\
0&D_{i}
\end{pmatrix}=r(A_{i})+r(D_{i}),~
r\begin{pmatrix}
B_{i}&0\\E_{i}&C_{i}
\end{pmatrix}=r(B_{i})+r(C_{i}),
\end{align}
\begin{align}\label{march15equ034}
&r\begin{pmatrix}\begin{smallmatrix}
A_{m}&E_{m}&C_{m}\\
 &D_{m}& &B_{m+1}\\
 & & A_{m+1}&-E_{m+1}&C_{m+1}\\
  & & & \ddots & \ddots & \ddots\\
  & & & & A_{n}&(-1)^{n-m}E_{n}&C_{n}\end{smallmatrix}
\end{pmatrix}\nonumber\\&=r
\begin{pmatrix}\begin{smallmatrix}
A_{m}&C_{m}\\
& A_{m+1}&C_{m+1}\\
& & \ddots&\ddots\\
& & & A_{n}&C_{n}\end{smallmatrix}
\end{pmatrix}+r
\begin{pmatrix}\begin{smallmatrix}
D_{m}&B_{m+1}\\
& D_{m+1}&B_{m+2}\\
& & \ddots&\ddots\\
& & & D_{n-1}&B_{n}\end{smallmatrix}
\end{pmatrix},
\end{align}
\begin{align}\label{march15equ035}
&r
\begin{pmatrix}\begin{smallmatrix}
B_{m}\\
E_{m}&C_{m}\\
D_{m}& &B_{m+1}\\
 &A_{m+1}&-E_{m+1}&\ddots\\
 & & D_{m+1}& \ddots & B_{n}\\
  & & &\ddots&(-1)^{n-m}E_{n}\\
  & & & & D_{n}\end{smallmatrix}
\end{pmatrix}\nonumber\\&=r
\begin{pmatrix}\begin{smallmatrix}
C_{m}\\
 A_{m+1}&C_{m+1}\\
 &A_{m+2}&\ddots\\
 & &\ddots&C_{n-1}\\
 & & &A_{n}\end{smallmatrix}
\end{pmatrix}+r
\begin{pmatrix}\begin{smallmatrix}
B_{m}\\
D_{m}&B_{m+1}\\
& D_{m+1}&\ddots\\
& & \ddots&B_{n}\\
& & & D_{n}\end{smallmatrix}
\end{pmatrix},
\end{align}
\begin{align}\label{march15equ036}
&r
\begin{pmatrix}\begin{smallmatrix}
A_{m}&E_{m}&C_{m}\\
&D_{m}& &B_{m+1}\\
& &A_{m+1}&-E_{m+1}&\ddots\\
& & & D_{m+1}& \ddots & B_{n}\\
&  & & &\ddots&(-1)^{n-m}E_{n}\\
&  & & & & D_{n}\end{smallmatrix}
\end{pmatrix}\nonumber\\&=r
\begin{pmatrix}\begin{smallmatrix}
A_{m}&C_{m}\\
& A_{m+1}&C_{m+1}\\
& & \ddots&\ddots\\
& & & A_{n-1}&C_{n-1}\\
& & & & A_{n}\end{smallmatrix}
\end{pmatrix}+r
\begin{pmatrix}\begin{smallmatrix}
D_{m}&B_{m+1}\\
& D_{m+1}&B_{m+2}\\
& & \ddots&\ddots\\
& & & D_{n-1}&B_{n}\\
& & & & D_{n}\end{smallmatrix}
\end{pmatrix},
\end{align}
\begin{align}\label{march15equ037}
&r
\begin{pmatrix}\begin{smallmatrix}
B_{m}\\
E_{m}&C_{m}\\
D_{m}& &B_{m+1}\\
& A_{m+1}&-E_{m+1}&C_{m+1}\\
 & & \ddots & \ddots & \ddots\\
 & & & A_{n}&(-1)^{n-m}E_{n}&C_{n}\end{smallmatrix}
\end{pmatrix}\nonumber\\&=r
\begin{pmatrix}\begin{smallmatrix}
C_{m}\\
A_{m+1}&C_{m+1}\\
&A_{m+2}&C_{m+2}\\
&&\ddots &\ddots\\
& & & A_{n}&C_{n}\end{smallmatrix}
\end{pmatrix}+r
\begin{pmatrix}\begin{smallmatrix}
B_{m}\\
D_{m+1}&B_{m+1}\\
&D_{m+2}&B_{m+2}\\
&&\ddots &\ddots\\
& & & D_{n-1}&B_{n}\end{smallmatrix}
\end{pmatrix},
\end{align}where the blank entries in (\ref{march15equ034})-(\ref{march15equ037}) are all zeros.
\end{theorem}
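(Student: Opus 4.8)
The plan is to prove the conjecture by induction on $k$, peeling off one equation at a time and reducing the system to a single two-sided Sylvester-type equation whose consistency is governed by the classical Baksalary–Kala / Roth-type rank criteria. The base cases $k=1$ and $k=3$ are already available from \cite{JKB2} and \cite{helaa}, so the inductive step is where the work lies. Concretely, I would first treat the last equation $A_kX_kB_k+C_kX_{k+1}D_k=E_k$: since $X_{k+1}$ appears in no other equation, the general solution of this equation alone, in the unknown $X_{k+1}$, can be parametrized (by the standard solvability theory for $CXD=F$ after fixing $X_k$), and substituting the constraint it imposes on $X_k$ back into the $(k-1)$-st equation modifies $E_{k-1}$ into a new right-hand side living in a slightly larger matrix. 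The key technical device is the block-matrix form of the solvability conditions for the two-block equation $A_1X_1B_1+C_1X_2D_1=E_1$ with $X_2$ further constrained — this is exactly the content of the $k=1$ and the coupling-type conditions, expressed via Moore–Penrose inverses and the projectors $L_A$, $R_A$, $L_B$, $R_B$.

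**The main step is an explicit rank-reduction computation.** After eliminating $X_{k+1}$, the residual system on $X_1,\dots,X_k$ has the same shape as \eqref{march15equ031} with $k$ replaced by $k-1$, but the data of the $(k-1)$-st equation has been enlarged: the new $C_{k-1},D_{k-1},E_{k-1}$ are block matrices built from $C_{k-1},D_{k-1},E_{k-1}$ together with $A_k, B_k, C_k, D_k, E_k$ and the relevant projectors. I would then show, by elementary block row/column operations and the standard rank identities $r(RA, B) = r(A^*A, ...)$-type lemmas (specifically $r\begin{pmatrix} A & B \\ C & 0\end{pmatrix} = r(B) + r(C) + r(L_B A R_C)$ and its transposes), that each of the $2k(k+1)$ rank equalities \eqref{march15equ032}--\eqref{march15equ037} for the original system is \emph{equivalent} to the corresponding $2(k-1)k$ rank equalities for the reduced system \emph{together with} the four ``boundary'' equalities \eqref{march15equ032}--\eqref{march15equ033} attached to the $k$-th equation. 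The bookkeeping here is delicate: one must check that the chain conditions \eqref{march15equ034}--\eqref{march15equ037} with upper index $n=k$ collapse, after the substitution, precisely onto the chain conditions with $n=k-1$ for the enlarged data, and that no condition is lost or double-counted.

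**The hard part will be managing the combinatorics of the four families of ``chain'' conditions \eqref{march15equ034}--\eqref{march15equ037} simultaneously.** These four families differ only in whether the top-left corner carries an $A_m$ block or a $B_m$ row, and whether the bottom-right corner carries a $C_n$ column or a $D_n$ row; under the elimination of $X_{k+1}$ the bottom-right corner is the one that changes, so I expect \eqref{march15equ034} and \eqref{march15equ035} (which end in $C_n$ resp.\ $D_n$ without the trailing $D_n$/$C_n$ block) to transform into \eqref{march15equ036} and \eqref{march15equ037} for the shorter chain, and vice versa, with the endpoint index shifting by one. Verifying this matching cleanly — rather than by a brute-force determinant expansion — will require isolating a single ``transfer'' lemma: a rank identity that says adjoining the block row/column coming from the $k$-th equation to a chain matrix of length $n-m$ either leaves its rank-defect unchanged (when the boundary conditions hold) or detects exactly the failure of one boundary condition. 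Once that transfer lemma is in hand, the induction closes by a routine, if lengthy, case check over the position of $(m,n)$ relative to $k$ (namely $n<k$, $n=k$, and $m=k$).

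Throughout I would lean on the Moore–Penrose machinery and the rank formulas for partitioned quaternion matrices recalled in the introduction; since rank over $\mathbb H$ behaves exactly as over a field for these purposes (every quaternion matrix has a well-defined rank stable under the complex adjoint map), none of the classical commutative-field arguments needs modification, only careful transcription.
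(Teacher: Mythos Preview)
Your inductive strategy has a structural gap at the reduction step. After eliminating $X_{k+1}$ from the last equation, the solvability condition (for $X_{k+1}$ given $X_k$) is $(R_{C_k}A_k)\,X_k\,(B_kL_{D_k}) = R_{C_k}E_kL_{D_k}$, a two-sided constraint on $X_k$ with multipliers $P:=R_{C_k}A_k$ and $Q:=B_kL_{D_k}$. You must then combine this with equation $k-1$, namely $A_{k-1}X_{k-1}B_{k-1}+C_{k-1}X_kD_{k-1}=E_{k-1}$. But these two constraints involve $X_k$ with \emph{different} pairs of left and right multipliers, and they cannot be merged into a single equation $\tilde A_{k-1}X_{k-1}\tilde B_{k-1}+\tilde C_{k-1}X_k\tilde D_{k-1}=\tilde E_{k-1}$ by ``enlarging'' the data to blocks: the product $\bigl(\begin{smallmatrix}C_{k-1}\\P\end{smallmatrix}\bigr)X_k\bigl(\begin{smallmatrix}D_{k-1}&Q\end{smallmatrix}\bigr)$ creates the unwanted cross-terms $C_{k-1}X_kQ$ and $PX_kD_{k-1}$. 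If instead you parametrize $X_k$ from $PX_kQ=R$ and substitute, equation $k-1$ picks up \emph{two} new free unknowns (one through $L_P$, one through $R_Q$) and becomes a three-term Sylvester equation. Either way the residual system is not of the shape \eqref{march15equ031}, so the induction hypothesis does not apply and the ``transfer lemma'' you envisage has nothing to transfer between.

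The paper's reduction avoids this obstruction by not peeling off a single equation but treating all $k$ equations simultaneously. Each equation is first solved on its own (this consumes conditions \eqref{march15equ032}--\eqref{march15equ033}), giving parametrized general solutions that contain, in particular, a free matrix $Y_i$. Equating the expression for $X_{i+1}$ produced by equation $i$ with that produced by equation $i+1$ gives, for each $i=1,\dots,k-1$, a matching equation; after quotienting out the remaining free parameters via the one-sided Roth criterion $R_{A}EL_{D}=0$, each matching equation becomes a genuine two-sided Sylvester equation $\widehat{A_i}Y_i\widehat{B_i}+\widehat{C_i}Y_{i+1}\widehat{D_i}=\widehat{E_i}$ with hatted coefficients built from projectors. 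This \emph{is} a system of the form \eqref{march15equ031} with $k-1$ equations, so induction applies; the remaining work---of exactly the flavor you anticipated---is to verify via block rank manipulations that the hatted rank conditions unwind to \eqref{march15equ034}--\eqref{march15equ037}. The idea you are missing is that the reduction must act on all equations at once, matching consecutive overlaps rather than eliminating an endpoint.
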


\section{\textbf{Proof of Theorem \ref{maintheorem}}}

In this section, we give the proof of Theorem \ref{maintheorem}. We proceed with the proof by induction.  Lemma \ref{lemma01} proves Theorem \ref{maintheorem} for the case $k=1$.

\begin{lemma}\label{lemma01}(\cite{JKB2}, \cite{A.B}, \cite{wqw01})
Consider the quaternion matrix equation
\begin{align}\label{march16equ041}
A_{1}X_{1}B_{1}+C_{1}X_{2}D_{1}=E_{1}.
\end{align} Let $M_{1}=R_{A_{1}}C_{1}$, $N_{1}=D_{1}L_{B_{1}}$,
$S_{1}=C_{1}L_{M_{1}}$. Then the following statements are
equivalent:\newline(1) Equation (\ref{march16equ041}) is consistent.\newline%
(2)
\begin{align}
R_{M_{1}}R_{A_{1}}E=0,~ EL_{B}L_{N_{1}}=0,~ R_{A_{1}}E_{1}L_{D_{1}}=0,~ R_{C_{1}}E_{1}L_{B_{1}}=0.
\end{align}
(3)
\begin{align}\label{march16equ043}
r\begin{pmatrix}
A_{1}&E_{1}&C_{1}
\end{pmatrix}=r\begin{pmatrix}
A_{1}&C_{1}
\end{pmatrix},~
r\begin{pmatrix}
B_{1}\\E_{1}\\D_{1}
\end{pmatrix}=r\begin{pmatrix}B_{1}\\D_{1}\end{pmatrix},
\end{align}
\begin{align} \label{march16equ044}
r\begin{pmatrix}
A_{1}&E_{1}\\
0&D_{1}
\end{pmatrix}=r(A_{1})+r(D_{1}),~
r\begin{pmatrix}
B_{1}&0\\E_{1}&C_{1}
\end{pmatrix}=r(B_{1})+r(C_{1}).
\end{align}
In this case, the general solution to (\ref{march16equ041}) can be
expressed as
\begin{align}
X_{1}=A_{1}^{\dag}E_{1}B_{1}^{\dag}-A_{1}^{\dag}C_{1}M_{1}^{\dag}E_{1}B_{1}^{\dag}-A_{1}^{\dag}S_{1}%
C_{1}^{\dag}E_{1}N_{1}^{\dag}D_{1}B_{1}^{\dag}-A_{1}^{\dag}S_{1}Y_{1}R_{N_{1}}D_{1}B_{1}^{\dag}+L_{A_{1}}%
Y_{2}+Y_{3}R_{B_{1}},
\end{align}
\begin{align}
X_{2}=M_{1}^{\dag}E_{1}D_{1}^{\dag}+S_{1}^{\dag}S_{1}C_{1}^{\dag}E_{1}N_{1}^{\dag}%
+L_{M_{1}}L_{S_{1}}Y_{4}+Y_{5}R_{D_{1}}+L_{M_{1}}Y_{1}R_{N_{1}},
\end{align}
where $Y_{1},Y_{2},Y_{3},Y_{4},Y_{5}$ are arbitrary matrices over
$\mathbb{H}$ with appropriate sizes. As a special case of (\ref{march16equ041}), the matrix equation
\begin{align}\label{march17equ042}
A_{1}X_{1}+X_{2}D_{1}=E_{1}
\end{align}is consistent if and only if
\begin{align}\label{march17equ048}
R_{A_{1}}E_{1}L_{D_{1}}=0.
\end{align}
\end{lemma}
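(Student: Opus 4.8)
The plan is to establish \lemref{lemma01} by the standard projector/rank-equality calculus for coupled linear matrix equations over $\mathbb{H}$, exactly as in the cited references \cite{JKB2}, \cite{A.B}, \cite{wqw01}, keeping in mind that every step used is valid over the quaternions because the Moore--Penrose inverse exists and behaves exactly as over $\mathbb{C}$. First I would reduce equation (\ref{march16equ041}) to a single-unknown problem: treating $X_2$ as free, the equation $A_1X_1B_1=E_1-C_1X_2D_1$ in the unknown $X_1$ is solvable (by the classical criterion $AXB=F$ is solvable iff $R_AFL_B=0$, with general solution $X=A_1^{\dag}FB_1^{\dag}+L_{A_1}Y_2+Y_3R_{B_1}$) precisely when $R_{A_1}(E_1-C_1X_2D_1)L_{B_1}=0$, i.e.
\begin{align*}
R_{A_1}C_1X_2D_1L_{B_1}=R_{A_1}E_1L_{B_1}.
\end{align*}
Writing $M_1=R_{A_1}C_1$ and $N_1=D_1L_{B_1}$ this is $M_1X_2N_1=R_{A_1}E_1L_{B_1}$, a two-sided equation in the single unknown $X_2$. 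I would then invoke the solvability criterion for a two-sided equation $MXN=F$: it is consistent iff $R_MF=0$ and $FL_N=0$, with general solution $X=M^{\dag}FN^{\dag}+L_MZ+WR_N$. This yields the projector form of condition (2); back-substituting the general $X_2$ into the formula for $X_1$ and simplifying (using $S_1=C_1L_{M_1}$ to absorb the free parameter $L_{M_1}Y_1R_{N_1}$ appearing in $X_2$) gives the displayed closed forms for $X_1$ and $X_2$ with five free matrices $Y_1,\dots,Y_5$.

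Next I would show (2) $\Leftrightarrow$ (3), converting each projector identity into a rank equality. The workhorse is the family of rank formulas for partitioned matrices involving $L$'s and $R$'s, e.g.
\begin{align*}
r\bigl(R_AB\bigr)=r\begin{pmatrix}A&B\end{pmatrix}-r(A),\quad
r\bigl(BL_A\bigr)=r\begin{pmatrix}A\\B\end{pmatrix}-r(A),
\end{align*}
together with the two-sided analogue
\begin{align*}
r\bigl(R_ABL_C\bigr)=r\begin{pmatrix}A&B\\0&C\end{pmatrix}-r(A)-r(C)
\end{align*}
and the more elaborate identity that expresses $r\begin{pmatrix}R_AB&R_AC\end{pmatrix}$ (or $r(R_MR_AE)$) as a rank of a $2\times3$ or $3\times2$ block matrix minus a sum of ranks. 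Applying these: $R_{A_1}E_1L_{D_1}=0$ becomes the first identity in (\ref{march16equ044}); $R_{C_1}E_1L_{B_1}=0$ becomes the second; $R_{M_1}R_{A_1}E_1=0$ becomes the first identity in (\ref{march16equ043}) after noting $r\begin{pmatrix}A_1&E_1&C_1\end{pmatrix}-r\begin{pmatrix}A_1&C_1\end{pmatrix}=r(R_{M_1}R_{A_1}E_1)$; and $E_1L_{B_1}L_{N_1}=0$ becomes the second identity in (\ref{march16equ043}) symmetrically. Each of these is a routine application of the block-rank lemmas, so I would cite the relevant rank formulas (they are standard, e.g. in the Marsaglia--Styan circle of identities, valid over $\mathbb{H}$) rather than re-derive them.

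Finally, the special case (\ref{march17equ042}) follows by taking $B_1=I$ and $C_1=I$ in (\ref{march16equ041}): then $M_1=R_{A_1}$, $N_1=D_1$, $L_{B_1}=0$, so the second and third conditions in (2) collapse, $R_{M_1}R_{A_1}E_1=R_{A_1}E_1$ forces nothing new beyond $R_{A_1}E_1L_{D_1}=0$ once one checks $R_{A_1}R_{A_1}=R_{A_1}$ kills the redundant part, and one is left with (\ref{march17equ048}); alternatively it is just the classical criterion for $A_1X_1+X_2D_1=E_1$. I expect the main technical obstacle to be bookkeeping: verifying that after back-substitution the six-parameter expression for $(X_1,X_2)$ genuinely collapses to the stated five-parameter form, i.e.\ that the parameter $Y_1$ entering through $X_2=\cdots+L_{M_1}Y_1R_{N_1}$ contributes to $X_1$ exactly the term $-A_1^{\dag}S_1Y_1R_{N_1}D_1B_1^{\dag}$ and nothing redundant — this requires the identities $R_{A_1}C_1L_{M_1}=M_1L_{M_1}=0$ and $C_1L_{M_1}=S_1$ to be used carefully. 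Everything else is a mechanical, if lengthy, manipulation of Moore--Penrose identities.
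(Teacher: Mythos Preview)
Your reduction contains a genuine error that breaks the argument. You claim that ``$AXB=F$ is solvable iff $R_{A}FL_{B}=0$,'' but the correct criterion is the \emph{pair} of conditions $R_{A}F=0$ and $FL_{B}=0$ (equivalently $AA^{\dag}FB^{\dag}B=F$); the single condition $R_{A}FL_{B}=0$ is strictly weaker. A concrete counterexample: take $A_{1}=0$, $B_{1}=I$, $C_{1}=I$, $D_{1}=0$, and $E_{1}\neq 0$. Then $L_{B_{1}}=0$, so your reduced equation $M_{1}X_{2}N_{1}=R_{A_{1}}E_{1}L_{B_{1}}$ reads $0=0$ and is trivially solvable, whereas the original equation $0+X_{2}\cdot 0=E_{1}$ is not. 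Consequently the two conditions you would extract, $R_{M_{1}}R_{A_{1}}E_{1}L_{B_{1}}=0$ and $R_{A_{1}}E_{1}L_{B_{1}}L_{N_{1}}=0$, are not equivalent to the four conditions in (2); in the example above both of yours hold while $R_{A_{1}}E_{1}L_{D_{1}}=E_{1}\neq 0$ fails.

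The repair is to impose \emph{both} $R_{A_{1}}(E_{1}-C_{1}X_{2}D_{1})=0$ and $(E_{1}-C_{1}X_{2}D_{1})L_{B_{1}}=0$, which yields the coupled pair $M_{1}X_{2}D_{1}=R_{A_{1}}E_{1}$ and $C_{1}X_{2}N_{1}=E_{1}L_{B_{1}}$ in the single unknown $X_{2}$. The four projector conditions in (2) are exactly the compatibility conditions for this pair to admit a common $X_{2}$, and it is in matching the two general solutions that $S_{1}=C_{1}L_{M_{1}}$ enters nontrivially and the five-parameter form of $(X_{1},X_{2})$ emerges. For the record, the paper does not supply its own proof of this lemma; it is quoted from \cite{JKB2}, \cite{A.B}, \cite{wqw01}, so once your reduction is corrected you are following the standard route of those references.
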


To simplify the ranks of the proof of Theorem \ref{maintheorem}, we need the following lemma.

\begin{lemma}
\label{hlemma03}\cite{GPH}. Let $A\in\mathbb{H}^{m\times n},B\in
\mathbb{H}^{m\times k},$ and $C\in\mathbb{H}^{l\times
n}$ be given. Then\newline
$(1)~ r(A)+r(R_{A}B)=r(B)+r(R_{B}A)=r\begin{pmatrix}A&B\end{pmatrix}.$\newline
$ (2)~ r(A)+r(CL_{A})=r(C)+r(AL_{C})=r%
\begin{pmatrix}
A\\
C
\end{pmatrix}
.$
\end{lemma}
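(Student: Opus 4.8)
The final statement to prove is \lemref{hlemma03}, the pair of rank identities for the Moore-Penrose projectors. The plan is to prove each identity by exhibiting a sequence of block-matrix manipulations that preserve rank and transform the augmented matrix on the right into a block-triangular form from which the left-hand sum of ranks can be read off. I would work entirely over the quaternion algebra $\mathbb{H}$, using only that the Moore-Penrose inverse exists and satisfies the four defining identities (i)--(iv) quoted in the introduction, together with the idempotence and self-adjointness of $AA^{\dag}$ and $A^{\dag}A$; these are exactly the properties of $R_A$ and $L_A$ that I need.

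For part $(1)$, I would start from the $m\times(n+k)$ block matrix $\begin{pmatrix}A&B\end{pmatrix}$ and apply an invertible column operation. Since $A^{\dag}A$ acts on the column space indexed by $A$, right-multiplication by the invertible matrix $\begin{pmatrix}I&-A^{\dag}B\\0&I\end{pmatrix}$ sends $\begin{pmatrix}A&B\end{pmatrix}$ to $\begin{pmatrix}A&B-AA^{\dag}B\end{pmatrix}=\begin{pmatrix}A&R_{A}B\end{pmatrix}$, using $R_A = I - AA^{\dag}$. Because this column operation is invertible it preserves rank, so $r\begin{pmatrix}A&B\end{pmatrix}=r\begin{pmatrix}A&R_AB\end{pmatrix}$. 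The key observation is then that the column spaces of $A$ and of $R_AB$ intersect only in the zero vector: any vector in the range of $R_A B$ lies in the range of $R_A=I-AA^{\dag}$, which is the orthogonal complement of the range of $A$ (here I use the self-adjointness $(AA^{\dag})^{*}=AA^{\dag}$, so that $AA^{\dag}$ is the orthogonal projector onto $\mathrm{range}(A)$). Hence the ranks add: $r\begin{pmatrix}A&R_AB\end{pmatrix}=r(A)+r(R_AB)$, giving the first equality. The symmetric equality $r(B)+r(R_BA)$ follows by interchanging the roles of $A$ and $B$, and part $(2)$ is the transpose/dual statement, obtained by applying part $(1)$ to conjugate transposes and using $R_{A^{*}}=L_A^{*}$ together with $r(M)=r(M^{*})$.

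An alternative and perhaps cleaner route, which I would present as the main argument, avoids any appeal to orthogonality and works purely algebraically. I would verify directly that $r(A)+r(R_AB)=r\begin{pmatrix}A&B\end{pmatrix}$ by writing down two mutually inverse block matrices that conjugate $\begin{pmatrix}A&B\end{pmatrix}$ (suitably padded) into $\mathrm{diag}(A,\,R_AB)$. Concretely, one checks the identity
\begin{align*}
\begin{pmatrix}I&0\\-A^{\dag}&I\end{pmatrix}
\begin{pmatrix}A&B\\0&0\end{pmatrix}
\begin{pmatrix}I&-A^{\dag}B\\0&I\end{pmatrix}
=\begin{pmatrix}A&0\\0&R_AB\end{pmatrix}
\end{align*}
where the bottom-left entry becomes $-A^{\dag}A$ times something that is absorbed, and the verification reduces to the single relation $AA^{\dag}A=A$. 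Since the outer factors are square and invertible, the two sides have equal rank, and the right side has rank $r(A)+r(R_AB)$.

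The main obstacle here is purely bookkeeping rather than conceptual: I must make sure every off-diagonal block cancels using only the four Moore-Penrose axioms, and in the noncommutative quaternion setting I must be careful never to commute scalars or reorder products illegitimately, since $\mathbb{H}$ is noncommutative. The rank axioms $r(M)=r(M^{*})$ and invariance of rank under multiplication by invertible matrices all hold over $\mathbb{H}$, so no genuinely new difficulty arises from working over the quaternions rather than a field; the only care needed is to keep the factor $A^{\dag}$ consistently on the correct side in each product. Once the block-factorization identity is verified by direct multiplication, both equalities in part $(1)$ follow immediately, and part $(2)$ is obtained by applying part $(1)$ to $A^{*}$ and $C^{*}$ and transposing.
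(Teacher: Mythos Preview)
The paper does not prove this lemma at all; it is quoted from Marsaglia and Styan and used as a black box. So there is no ``paper's proof'' to compare against, and your proposal must stand on its own merits.

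Your first argument is correct and complete: the right-multiplication by $\begin{pmatrix}I&-A^{\dag}B\\0&I\end{pmatrix}$ is invertible and yields $\begin{pmatrix}A&R_AB\end{pmatrix}$, and since $AA^{\dag}$ is a self-adjoint idempotent over $\mathbb{H}$ it is the orthogonal projector onto $\mathrm{range}(A)$, so $\mathrm{range}(R_AB)\subseteq\mathrm{range}(A)^{\perp}$ and the ranks add. The reduction of part~(2) to part~(1) via conjugate transpose is also fine, using $(A^{*})^{\dag}=(A^{\dag})^{*}$ and the self-adjointness of $A^{\dag}A$ to get $R_{A^{*}}=L_A$.

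Your ``alternative and perhaps cleaner route'', however, is wrong as written. First, the block sizes do not match: with $A\in\mathbb{H}^{m\times n}$ your left factor forces the padded middle matrix to have $m+n$ rows, so the bottom-right block of the product is $n\times k$, but $R_AB$ is $m\times k$. Second, if one actually carries out the multiplication one obtains
\[
\begin{pmatrix}I&0\\-A^{\dag}&I\end{pmatrix}
\begin{pmatrix}A&B\\0&0\end{pmatrix}
\begin{pmatrix}I&-A^{\dag}B\\0&I\end{pmatrix}
=\begin{pmatrix}A&R_AB\\-A^{\dag}A&0\end{pmatrix},
\]
which is not block-diagonal; your own aside that ``the bottom-left entry becomes $-A^{\dag}A$ times something that is absorbed'' shows you noticed this without fixing it. That matrix does not visibly have rank $r(A)+r(R_AB)$ without a further argument (in fact one would be re-proving the orthogonality step in disguise). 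Since the orthogonality argument already settles the matter cleanly, drop the alternative or repair it by padding with zero \emph{columns} rather than zero rows and using a genuinely block-diagonalizing pair of factors.
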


Now we give the proof of Theorem \ref{maintheorem}.

\textbf{Proof of Theorem \ref{maintheorem}:} We proceed with the proof by induction. If $k=1$, then the system (\ref{march15equ031}) becomes the equation (\ref{march16equ041}) and the rank equalities (\ref{march15equ032})-(\ref{march15equ037}) become (\ref{march16equ043})-(\ref{march16equ044}). Hence, the statement is true if $k=1.$

Suppose the statement is true when the number of the equations is $k-1$. We show by induction that it is
true for the number of the equations $k$.

We separate the system (\ref{march15equ031}) into $k$ parts
\begin{align}
&A_{1}X_{1}B_{1}+C_{1}X_{2}D_{1}=E_{1},\label{march16equ047}
\\
&A_{2}X_{2}B_{2}+C_{2}X_{3}D_{2}=E_{2},\label{march16equ048}
\\
&A_{3}X_{3}B_{3}+C_{3}X_{4}D_{3}=E_{3},\label{march16equ049}
\\
&\qquad \qquad \vdots \nonumber
\\
&A_{k}X_{k}B_{k}+C_{k}X_{k+1}D_{k}=E_{k}.\label{march16equ0410}
\end{align}
It following from Lemma \ref{lemma01} that the equations (\ref{march16equ047})-(\ref{march16equ0410}) are consistent if and only if the rank equalities (\ref{march15equ032}) and (\ref{march15equ033}) hold. The general solution to the equation
\begin{align}
&A_{i}X_{i}B_{i}+C_{i}X_{i+1}D_{i}=E_{i}\label{march16equ0411}
\end{align}can be expressed as
\begin{align}
X_{i}=A_{i}^{\dag}E_{i}B_{i}^{\dag}-A_{i}^{\dag}C_{i}M_{i}^{\dag}E_{i}B_{i}^{\dag}-A_{i}^{\dag}S_{i}%
C_{i}^{\dag}E_{i}N_{i}^{\dag}D_{i}B_{i}^{\dag}-A_{i}^{\dag}S_{i}Y_{i}R_{N_{i}}D_{i}B_{i}^{\dag}+L_{A_{i}}%
Z^{(i)}_{1}+Z^{(i)}_{2}R_{B_{i}},
\end{align}
\begin{align}
X_{i+1}=M_{i}^{\dag}E_{i}D_{i}^{\dag}+S_{i}^{\dag}S_{i}C_{i}^{\dag}E_{i}N_{i}^{\dag}%
+L_{M_{i}}L_{S_{i}}Z^{(i)}_{3}+Z^{(i)}_{4}R_{D_{i}}+L_{M_{i}}Y_{i}R_{N_{i}},
\end{align}
where
\begin{align}
M_{i}=R_{A_{i}}C_{i}, ~N_{i}=D_{i}L_{B_{i}},~S_{i}=C_{i}L_{M_{i}},
\end{align}
and $Y_{i},Z^{(i)}_{1},Z^{(i)}_{2},Z^{(i)}_{3},Z^{(i)}_{4}$ are arbitrary matrices over
$\mathbb{H}$ with appropriate sizes.

Let $X_{i+1}$ in the $i$th equation be equal to $X_{i+1}$ in the $(i+1)$th equation for every $i=1,\ldots,k-1.$ Then, we have the following system
\scriptsize
\begin{align}\label{march17equ417}
\begin{cases}
\begin{pmatrix}L_{M_{1}}L_{S_{1}}&-L_{A_{2}}\end{pmatrix}\begin{pmatrix}Z^{(1)}_{3}\\Z^{(2)}_{1}\end{pmatrix}
+\begin{pmatrix}Z^{(1)}_{4}&Z^{(2)}_{2}\end{pmatrix}\begin{pmatrix}R_{D_{1}}\\-R_{B_{2}}\end{pmatrix}
=F_{1}-L_{M_{1}}Y_{1}R_{N_{1}}-A_{2}^{\dag}S_{2}Y_{2}R_{N_{2}}D_{2}B_{2}^{\dag},\\
\begin{pmatrix}L_{M_{2}}L_{S_{2}}&-L_{A_{3}}\end{pmatrix}\begin{pmatrix}Z^{(2)}_{3}\\Z^{(3)}_{1}\end{pmatrix}
+\begin{pmatrix}Z^{(2)}_{4}&Z^{(3)}_{2}\end{pmatrix}\begin{pmatrix}R_{D_{2}}\\-R_{B_{3}}\end{pmatrix}
=F_{2}-L_{M_{2}}Y_{2}R_{N_{2}}-A_{3}^{\dag}S_{3}Y_{3}R_{N_{3}}D_{3}B_{3}^{\dag},\\
\vdots\\
\begin{pmatrix}L_{M_{j}}L_{S_{j}}&-L_{A_{j+1}}\end{pmatrix}\begin{pmatrix}Z^{(j)}_{3}\\Z^{(j+1)}_{1}\end{pmatrix}
+\begin{pmatrix}Z^{(j)}_{4}&Z^{(j+1)}_{2}\end{pmatrix}\begin{pmatrix}R_{D_{j}}\\-R_{B_{j+1}}\end{pmatrix}
=F_{j}-L_{M_{j}}Y_{j}R_{N_{j}}-A_{j+1}^{\dag}S_{j+1}Y_{j+1}R_{N_{j+1}}D_{j+1}B_{j+1}^{\dag},\\
\vdots\\
\begin{pmatrix}L_{M_{k-1}}L_{S_{k-1}}&-L_{A_{k}}\end{pmatrix}\begin{pmatrix}Z^{(k-1)}_{3}\\Z^{(k)}_{1}\end{pmatrix}
+\begin{pmatrix}Z^{(k-1)}_{4}&Z^{(k)}_{2}\end{pmatrix}\begin{pmatrix}R_{D_{k-1}}\\-R_{B_{k}}\end{pmatrix}
=F_{k-1}-L_{M_{k-1}}Y_{k-1}R_{N_{k-1}}-A_{k}^{\dag}S_{k}Y_{k}R_{N_{k}}D_{k}B_{k}^{\dag},
\end{cases}
\end{align}\normalsize
where $j=\overline{1,k-1},$ and
\begin{align}\label{march17equfj}
F_{j}=&
A_{j+1}^{\dag}E_{j+1}B_{j+1}^{\dag}-A_{j+1}^{\dag}C_{j+1}M_{j+1}^{\dag}E_{j+1}B_{j+1}^{\dag}-A_{j+1}^{\dag}S_{j+1}%
C_{j+1}^{\dag}E_{j+1}N_{j+1}^{\dag}D_{j+1}B_{j+1}^{\dag}\nonumber\\&
-(M_{j}^{\dag}E_{j}D_{j}^{\dag}+S_{j}^{\dag}S_{j}C_{j}^{\dag}E_{j}N_{j}^{\dag}).
\end{align}Hence, the system (\ref{march15equ031}) is consistent if and only if (\ref{march15equ032}) and (\ref{march15equ033}) hold and the system (\ref{march17equ417}) is consistent. We now turn our attention to the solvability conditions to the system (\ref{march17equ417}). Observe that each equation in  the system (\ref{march17equ417}) has the form of (\ref{march17equ042}). It follows from the condition (\ref{march17equ048}) in Lemma \ref{lemma01} that the equation
\begin{align}
&\begin{pmatrix}L_{M_{j}}L_{S_{j}}&-L_{A_{j+1}}\end{pmatrix}\begin{pmatrix}Z^{(j)}_{3}\\Z^{(j+1)}_{1}\end{pmatrix}
+\begin{pmatrix}Z^{(j)}_{4}&Z^{(j+1)}_{2}\end{pmatrix}\begin{pmatrix}R_{D_{j}}\\-R_{B_{j+1}}\end{pmatrix}
\nonumber\\&=F_{j}-L_{M_{j}}Y_{j}R_{N_{j}}-A_{j+1}^{\dag}S_{j+1}Y_{j+1}R_{N_{j+1}}D_{j+1}B_{j+1}^{\dag}
\end{align}is consistent if and only if
\begin{align}\label{march17equ420}
R_{\begin{pmatrix}\begin{smallmatrix}L_{M_{j}}L_{S_{j}}&-L_{A_{j+1}}\end{smallmatrix}\end{pmatrix}}
\left(F_{j}-L_{M_{j}}Y_{j}R_{N_{j}}-A_{j+1}^{\dag}S_{j+1}Y_{j+1}R_{N_{j+1}}D_{j+1}B_{j+1}^{\dag}\right)
L_{\begin{pmatrix}\begin{smallmatrix}R_{D_{j}}\\-R_{B_{j+1}}\end{smallmatrix}\end{pmatrix}}=0.
\end{align}Put
\begin{align}\label{may28equ421}
\widehat{A_{j}}=R_{\begin{pmatrix}\begin{smallmatrix}L_{M_{j}}L_{S_{j}}&-L_{A_{j+1}}\end{smallmatrix}\end{pmatrix}}L_{M_{j}},~
\widehat{B_{j}}=R_{N_{j}}L_{\begin{pmatrix}\begin{smallmatrix}R_{D_{j}}\\-R_{B_{j+1}}\end{smallmatrix}\end{pmatrix}},
\end{align}
\begin{align}\label{may28equ422}
\widehat{C_{j}}=R_{\begin{pmatrix}\begin{smallmatrix}L_{M_{j}}L_{S_{j}}&-L_{A_{j+1}}\end{smallmatrix}\end{pmatrix}}A_{j+1}^{\dag}S_{j+1},~
\widehat{D_{j}}=R_{N_{j+1}}D_{j+1}B_{j+1}^{\dag}L_{\begin{pmatrix}\begin{smallmatrix}R_{D_{j}}\\-R_{B_{j+1}}\end{smallmatrix}\end{pmatrix}},
\end{align}
\begin{align}\label{may28equ423}
\widehat{E_{j}}=R_{\begin{pmatrix}\begin{smallmatrix}L_{M_{j}}L_{S_{j}}&-L_{A_{j+1}}\end{smallmatrix}\end{pmatrix}}F_{j}
L_{\begin{pmatrix}\begin{smallmatrix}R_{D_{j}}\\-R_{B_{j+1}}\end{smallmatrix}\end{pmatrix}}.
\end{align}Then the equation (\ref{march17equ420}) becomes the following form
\begin{align}
\widehat{A_{j}}Y_{j}\widehat{B_{j}}+\widehat{C_{j}}Y_{j+1}\widehat{D_{j}}=\widehat{E_{j}}.
\end{align}Thus, the system (\ref{march17equ417}) is consistent if and only if the following system is consistent
\begin{align}\label{march17equ424}
\begin{cases}
&\widehat{A_{1}}Y_{1}\widehat{B_{1}}+\widehat{C_{1}}Y_{2}\widehat{D_{1}}=\widehat{E_{1}},\\
&\qquad \qquad \quad \widehat{A_{2}}Y_{2}\widehat{B_{2}}+\widehat{C_{2}}Y_{3}\widehat{D_{2}}=\widehat{E_{2}},\\
&\qquad \qquad \quad \qquad \qquad \ddots\\
&\qquad \qquad \quad \qquad \qquad \widehat{A_{k-1}}Y_{k-1}\widehat{B_{k-1}}+\widehat{C_{k-1}}Y_{k}\widehat{D_{k-1}}=\widehat{E_{k-1}},
\end{cases}
\end{align}where $\widehat{A_{j}},~\widehat{B_{j}},~\widehat{C_{j}},~\widehat{D_{j}},~\widehat{E_{j}}$ are defined in (\ref{may28equ421})-(\ref{may28equ423}).
Note that the form of the system (\ref{march17equ424}) is same as the main system (\ref{march15equ031}) and the number of the equations in (\ref{march17equ424}) is $k-1$. Applying the induction hypothesis on the system (\ref{march17equ424}), we have that the system (\ref{march17equ424}) is consistent if and only if the following $2(k-1)k$ rank equalities hold for all $j=1,\ldots,k-1$ and $1\leq m< n\leq k-1$
\begin{align}\label{march17equ425}
r\begin{pmatrix}
\widehat{A_{j}}&\widehat{E_{j}}&\widehat{C_{j}}
\end{pmatrix}=r\begin{pmatrix}
\widehat{A_{j}}&\widehat{C_{j}}
\end{pmatrix},
\end{align}
\begin{align}\label{march19equ427}
r\begin{pmatrix}
\widehat{B_{j}}\\ \widehat{E_{j}}\\\widehat{D_{j}}
\end{pmatrix}=r\begin{pmatrix}\widehat{B_{j}}\\\widehat{D_{j}}\end{pmatrix},
\end{align}
\begin{align}\label{march17equ426}
r\begin{pmatrix}
\widehat{A_{j}}&\widehat{E_{j}}\\
0&\widehat{D_{j}}
\end{pmatrix}=r(\widehat{A_{j}})+r(\widehat{D_{j}}),
\end{align}
\begin{align}\label{march19equ429}
r\begin{pmatrix}
\widehat{B_{j}}&0\\\widehat{E_{j}}&\widehat{C_{j}}
\end{pmatrix}=r(\widehat{B_{j}})+r(\widehat{C_{j}}),
\end{align}
\begin{align}\label{march17equ427}
&r\begin{pmatrix}
\widehat{A_{m}}&\widehat{E_{m}}&\widehat{C_{m}}\\
 &\widehat{D_{m}}& &\widehat{B_{m+1}}\\
 & & \widehat{A_{m+1}}&-\widehat{E_{m+1}}&\widehat{C_{m+1}}\\
  & & & \ddots & \ddots & \ddots\\
  & & & & \widehat{A_{n}}&(-1)^{n-m}\widehat{E_{n}}&\widehat{C_{n}}
\end{pmatrix}\nonumber\\&=r
\begin{pmatrix}
\widehat{A_{m}}&\widehat{C_{m}}\\
& \widehat{A_{m+1}}&\widehat{C_{m+1}}\\
& & \ddots&\ddots\\
& & & \widehat{A_{n}}&\widehat{C_{n}}
\end{pmatrix}+r
\begin{pmatrix}
\widehat{D_{m}}&\widehat{B_{m+1}}\\
& \widehat{D_{m+1}}&\widehat{B_{m+2}}\\
& & \ddots&\ddots\\
& & & \widehat{D_{n-1}}&\widehat{B_{n}}
\end{pmatrix},
\end{align}
\begin{align}\label{march17equ428}
&r
\begin{pmatrix}
\widehat{B_{m}}\\
\widehat{E_{m}}&\widehat{C_{m}}\\
\widehat{D_{m}}& &\widehat{B_{m+1}}\\
 &\widehat{A_{m+1}}&-\widehat{E_{m+1}}&\ddots\\
 & & \widehat{D_{m+1}}& \ddots & \widehat{B_{n}}\\
  & & &\ddots&(-1)^{n-m}\widehat{E_{n}}\\
  & & & & D_{n}
\end{pmatrix}\nonumber\\&=r
\begin{pmatrix}
\widehat{C_{m}}\\
 \widehat{A_{m+1}}&\widehat{C_{m+1}}\\
 &\widehat{A_{m+2}}&\ddots\\
 & &\ddots&\widehat{C_{n-1}}\\
 & & &\widehat{A_{n}}
\end{pmatrix}+r
\begin{pmatrix}
\widehat{B_{m}}\\
\widehat{D_{m}}&\widehat{B_{m+1}}\\
& \widehat{D_{m+1}}&\ddots\\
& & \ddots&\widehat{B_{n}}\\
& & & \widehat{D_{n}}
\end{pmatrix},
\end{align}
\begin{align}\label{march17equ429}
&r
\begin{pmatrix}
\widehat{A_{m}}&\widehat{E_{m}}&\widehat{C_{m}}\\
&\widehat{D_{m}}& &\widehat{B_{m+1}}\\
& &\widehat{A_{m+1}}&-\widehat{E_{m+1}}&\ddots\\
& & & \widehat{D_{m+1}}& \ddots & \widehat{B_{n}}\\
&  & & &\ddots&(-1)^{n-m}\widehat{E_{n}}\\
&  & & & & \widehat{D_{n}}
\end{pmatrix}\nonumber\\&=r
\begin{pmatrix}
\widehat{A_{m}}&\widehat{C_{m}}\\
& \widehat{A_{m+1}}&\widehat{C_{m+1}}\\
& & \ddots&\ddots\\
& & & \widehat{A_{n-1}}&\widehat{C_{n-1}}\\
& & & & \widehat{A_{n}}
\end{pmatrix}+r
\begin{pmatrix}
\widehat{D_{m}}&\widehat{B_{m+1}}\\
& \widehat{D_{m+1}}&\widehat{B_{m+2}}\\
& & \ddots&\ddots\\
& & & \widehat{D_{n-1}}&\widehat{B_{n}}\\
& & & & \widehat{D_{n}}
\end{pmatrix},
\end{align}
\begin{align}\label{march17equ430}
&r
\begin{pmatrix}
\widehat{B_{m}}\\
\widehat{E_{m}}&\widehat{C_{m}}\\
\widehat{D_{m}}& &\widehat{B_{m+1}}\\
& \widehat{A_{m+1}}&-\widehat{E_{m+1}}&\widehat{C_{m+1}}\\
 & & \ddots & \ddots & \ddots\\
 & & & \widehat{A_{n}}&(-1)^{n-m}\widehat{E_{n}}&\widehat{C_{n}}
\end{pmatrix}\nonumber\\&=r
\begin{pmatrix}
\widehat{C_{m}}\\
\widehat{A_{m+1}}&\widehat{C_{m+1}}\\
&\widehat{A_{m+2}}&\widehat{C_{m+2}}\\
&&\ddots &\ddots\\
& & & \widehat{A_{n}}&\widehat{C_{n}}
\end{pmatrix}+r
\begin{pmatrix}
\widehat{B_{m}}\\
\widehat{D_{m+1}}&\widehat{B_{m+1}}\\
&\widehat{D_{m+2}}&\widehat{B_{m+2}}\\
&&\ddots &\ddots\\
& & & \widehat{D_{n-1}}&\widehat{B_{n}}
\end{pmatrix}.
\end{align}Next we will prove that the rank equalities (\ref{march17equ425})-(\ref{march17equ430}) are equivalent with the rank equalities (\ref{march15equ034})-(\ref{march15equ037}). We establish some useful facts that will be used throughout this part.
\begin{description}
  \item[Fact 1] The expression of $F_{j}$ in (\ref{march17equfj}):  Since
  \begin{align}
  X^{2}_{j+1}:=A_{j+1}^{\dag}E_{j+1}B_{j+1}^{\dag}-A_{j+1}^{\dag}C_{j+1}M_{j+1}^{\dag}E_{j+1}B_{j+1}^{\dag}-A_{j+1}^{\dag}S_{j+1}%
C_{j+1}^{\dag}E_{j+1}N_{j+1}^{\dag}D_{j+1}B_{j+1}^{\dag}
  \end{align}and
  \begin{align}
  X^{1}_{j+1}:=M_{j}^{\dag}E_{j}D_{j}^{\dag}+S_{j}^{\dag}S_{j}C_{j}^{\dag}E_{j}N_{j}^{\dag}
  \end{align}are special solutions to equations
  \begin{align}
  A_{j+1}X_{j+1}B_{j+1}+C_{j+1}X_{j+2}D_{j+1}=E_{j+1}
  \end{align}and
  \begin{align}
  A_{j}X_{j}B_{j}+C_{j}X_{j+1}D_{j}=E_{j},
  \end{align}respectively, under the rank equalities (\ref{march15equ032}) and (\ref{march15equ033}). Hence,
  \begin{align}\label{march18equ438}
  F_{j}=X^{2}_{j+1}-X^{1}_{j+1},
  \end{align}where $X^{2}_{j+1}$ and $X^{1}_{j+1}$ satisfy the equations
  \begin{align}\label{march18equ439}
  A_{j+1}X^{2}_{j+1}B_{j+1}+C_{j+1}X^{1}_{j+2}D_{j+1}=E_{j+1}
  \end{align}and
  \begin{align}\label{march18equ440}
  A_{j}X^{2}_{j}B_{j}+C_{j}X^{1}_{j+1}D_{j}=E_{j}.
  \end{align}
  \item[Fact 2] Formulas about $S_{j+1}$: From
  \begin{align}
  S_{j+1}-A_{j+1}A_{j+1}^{\dag}S_{j+1}=R_{A_{j+1}}S_{j+1}=R_{A_{j+1}}C_{j+1}L_{M_{j+1}}=M_{j+1}L_{M_{j+1}}=0,
  \end{align}we infer that
  \begin{align}\label{march17ajsj}
  A_{j+1}A_{j+1}^{\dag}S_{j+1}=S_{j+1}.
  \end{align}
  \item[Fact 3] The ranks of $\begin{pmatrix}R_{N_{j}}\\R_{D_{j}}\end{pmatrix}$ and $R_{N_{j}}$: Applying Lemma \ref{hlemma03} to $r\begin{pmatrix}R_{N_{j}}\\R_{D_{j}}\end{pmatrix}-r(R_{N_{j}})$ gives
  \begin{align*}
  &r\begin{pmatrix}R_{N_{j}}\\R_{D_{j}}\end{pmatrix}-r(R_{N_{j}})\\=&
  r\begin{pmatrix}I&N_{j}&0\\I&0&D_{j}\end{pmatrix}-r\begin{pmatrix}I&N_{j}\end{pmatrix}-r(D_{j})\\=&
  r\begin{pmatrix}D_{j}&N_{j}\end{pmatrix}-r(D_{j})\xlongequal{N_{j}=D_{j}L_{B_{j}}}~0.
  \end{align*}Hence, we have
  \begin{align}\label{march19equ443}
  r\begin{pmatrix}R_{N_{j}}\\R_{D_{j}}\end{pmatrix}=r(R_{N_{j}}),
  \end{align}i.e.,
  \begin{align}\label{march19equ444}
  R_{D_{j}}=T_{j}R_{N_{j}},
  \end{align}where $T_{j}$ is a matrix.
  \item[Fact 4] Formulas about $R_{N_{j+1}}D_{j+1}B_{j+1}^{\dag}B_{j+1}:$ Note that
  \begin{align*}
  R_{N_{j+1}}D_{j+1}-R_{N_{j+1}}D_{j+1}B_{j+1}^{\dag}B_{j+1}=R_{N_{j+1}}D_{j+1}L_{B_{j+1}}=R_{N_{j+1}}N_{j+1}=0.
  \end{align*}Hence, we have
  \begin{align}\label{march19equ445}
  R_{N_{j+1}}D_{j+1}B_{j+1}^{\dag}B_{j+1}=R_{N_{j+1}}D_{j+1}.
  \end{align}
\end{description}

We show that  (\ref{march17equ425})-(\ref{march17equ430}) are equivalent with (\ref{march15equ034})-(\ref{march15equ037}) through the following three steps.

\textbf{Step 1.} We show that the rank equality (\ref{march17equ425}) is equivalent with (\ref{march15equ034}) for the case $n-m=1.$ It follows from Lemma \ref{hlemma03} that
\begin{align*}
r\begin{pmatrix}
\widehat{A_{j}}&\widehat{E_{j}}&\widehat{C_{j}}
\end{pmatrix}=r\begin{pmatrix}
\widehat{A_{j}}&\widehat{C_{j}}
\end{pmatrix}\Longleftrightarrow
\end{align*}\scriptsize
\begin{align*}
&r\begin{pmatrix}
R_{\begin{pmatrix}\begin{smallmatrix}L_{M_{j}}L_{S_{j}}&-L_{A_{j+1}}\end{smallmatrix}\end{pmatrix}}L_{M_{j}}&
R_{\begin{pmatrix}\begin{smallmatrix}L_{M_{j}}L_{S_{j}}&-L_{A_{j+1}}\end{smallmatrix}\end{pmatrix}}F_{j}
L_{\begin{pmatrix}\begin{smallmatrix}R_{D_{j}}\\-R_{B_{j+1}}\end{smallmatrix}\end{pmatrix}}&
R_{\begin{pmatrix}\begin{smallmatrix}L_{M_{j}}L_{S_{j}}&-L_{A_{j+1}}\end{smallmatrix}\end{pmatrix}}A_{j+1}^{\dag}S_{j+1}
\end{pmatrix}\\&=r\begin{pmatrix}
R_{\begin{pmatrix}\begin{smallmatrix}L_{M_{j}}L_{S_{j}}&-L_{A_{j+1}}\end{smallmatrix}\end{pmatrix}}L_{M_{j}}&
R_{\begin{pmatrix}\begin{smallmatrix}L_{M_{j}}L_{S_{j}}&-L_{A_{j+1}}\end{smallmatrix}\end{pmatrix}}A_{j+1}^{\dag}S_{j+1}
\end{pmatrix}
\end{align*}
\begin{align*}\normalsize
\xLeftrightarrow{\mbox{Add} ~\begin{pmatrix}\begin{smallmatrix}L_{M_{j}}L_{S_{j}}&-L_{A_{j+1}}\end{smallmatrix}\end{pmatrix},~M_{j},
\begin{pmatrix}\begin{smallmatrix}R_{D_{j}}\\-R_{B_{j+1}}\end{smallmatrix}\end{pmatrix}~\mbox{to both sides} }
\end{align*} \normalsize
\begin{align*}
&r\begin{pmatrix}
I&F_{j}&A_{j+1}^{\dag}S_{j+1}&L_{M_{j}}L_{S_{j}}&L_{A_{j+1}}\\
M_{j}&0&0&0&0\\
0&R_{D_{j}}&0&0&0\\
0&R_{B_{j+1}}&0&0&0
\end{pmatrix}\\=&r\begin{pmatrix}
I&A_{j+1}^{\dag}S_{j+1}&L_{M_{j}}L_{S_{j}}&L_{A_{j+1}}\\
M_{j}&0&0&0
\end{pmatrix}+r\begin{pmatrix}R_{D_{j}}\\R_{B_{j+1}}\end{pmatrix}
\xLeftrightarrow{\mbox{Add} ~A_{j+1} ~\mbox{to both sides} }
\end{align*}
\begin{align*}
&r\begin{pmatrix}
I&F_{j}&A_{j+1}^{\dag}S_{j+1}&L_{M_{j}}L_{S_{j}}&I\\
M_{j}&0&0&0&0\\
0&R_{D_{j}}&0&0&0\\
0&R_{B_{j+1}}&0&0&0\\
0&0&0&0&A_{j+1}
\end{pmatrix}\\=&r\begin{pmatrix}
I&A_{j+1}^{\dag}S_{j+1}&L_{M_{j}}L_{S_{j}}&I\\
M_{j}&0&0&0\\
0&0&0&A_{j+1}
\end{pmatrix}+r\begin{pmatrix}R_{D_{j}}\\R_{B_{j+1}}\end{pmatrix}\xLeftrightarrow{\mbox{Use}~ (\ref{march17ajsj})~\mbox{and~elementary~ operations}}
\end{align*}
\begin{align*}
&r\begin{pmatrix}
I&F_{j}&0&I\\
M_{j}&0&0&0\\
0&R_{D_{j}}&0&0\\
0&R_{B_{j+1}}&0&0\\
0&0&S_{j+1}&A_{j+1}
\end{pmatrix}\\=&r\begin{pmatrix}
I&0&I\\
M_{j}&0&0\\
0&S_{j+1}&A_{j+1}
\end{pmatrix}+r\begin{pmatrix}R_{D_{j}}\\R_{B_{j+1}}\end{pmatrix}
\xLeftrightarrow{\mbox{Add}~ A_{j},~D_{j},~B_{j+1},~M_{j+1} ~\mbox{to both sides}}
\end{align*}
\begin{align*}
&r\begin{pmatrix}
I&F_{j}&0&I&0&0&0\\
C_{j}&0&0&0&A_{j}&0&0\\
0&I&0&0&0&D_{j}&0\\
0&I&0&0&0&0&B_{j+1}\\
0&0&C_{j+1}&A_{j+1}&0&0&0\\
0&0&M_{j+1}&0&0&0&0&0
\end{pmatrix}\\=&r\begin{pmatrix}
I&0&I&0\\
C_{j}&0&0&A_{j}\\
0&C_{j+1}&A_{j+1}&0\\
0&M_{j+1}&0&0&0
\end{pmatrix}+r\begin{pmatrix}I&D_{j}&0\\I&0&B_{j+1}\end{pmatrix}
\end{align*}
\begin{align*}
\xLeftrightarrow{\mbox{Use}~M_{j+1}=R_{A_{j+1}}C_{j+1},~\mbox{and elementary operations}}
\end{align*}
\begin{align*}
&r\begin{pmatrix}
I&F_{j}&0&I&0&0&0\\
C_{j}&0&0&0&A_{j}&0&0\\
0&I&0&0&0&D_{j}&0\\
0&I&0&0&0&0&B_{j+1}\\
0&0&C_{j+1}&A_{j+1}&0&0&0
\end{pmatrix}\\=&r\begin{pmatrix}
I&0&I&0\\
C_{j}&0&0&A_{j}\\
0&C_{j+1}&A_{j+1}&0
\end{pmatrix}+r\begin{pmatrix}I&D_{j}&0\\I&0&B_{j+1}\end{pmatrix}
\xLeftrightarrow{\mbox{Use~}~(\ref{march18equ438})}
\end{align*}
\begin{align*}
&r\begin{pmatrix}
I&X_{j+1}^{2}-X_{j+1}^{1}&0&I&0&0&0\\
C_{j}&0&0&0&A_{j}&0&0\\
0&I&0&0&0&D_{j}&0\\
0&I&0&0&0&0&B_{j+1}\\
0&0&C_{j+1}&A_{j+1}&0&0&0
\end{pmatrix}=r\begin{pmatrix}
I&0&I&0\\
C_{j}&0&0&A_{j}\\
0&C_{j+1}&A_{j+1}&0
\end{pmatrix}\\&+r\begin{pmatrix}I&D_{j}&0\\I&0&B_{j+1}\end{pmatrix}
\xLeftrightarrow{\mbox{Use}~(\ref{march18equ439}), (\ref{march18equ440})~\mbox{and ~elementary~ operations}}
\end{align*}
\begin{align*}
&r\begin{pmatrix}
A_{j}&E_{j}&C_{j}\\
 &D_{j}& &B_{j+1}\\
 & & A_{j+1}&-E_{j+1}&C_{j+1}
\end{pmatrix}\\=&r
\begin{pmatrix}
A_{j}&C_{j}\\
& A_{j+1}&C_{j+1}
\end{pmatrix}+r
\begin{pmatrix}
D_{j}&B_{j+1}
\end{pmatrix}\xlongequal{\mbox{Put} ~m=j ~ \mbox{and} ~n=j+1~\mbox{in}~(\ref{march15equ034})}~ (\ref{march15equ034}).
\end{align*}
We have showed that the rank equality (\ref{march17equ425}) is equivalent with (\ref{march15equ034}) when $n-m=1.$

\textbf{Step 2.} Now we will prove that the rank equality (\ref{march19equ427}) is equivalent with (\ref{march15equ035}) when $n-m=1.$ Applying Lemma \ref{hlemma03} to (\ref{march19equ427}) yields
\begin{align*}
r\begin{pmatrix}
\widehat{B_{j}}\\ \widehat{E_{j}}\\\widehat{D_{j}}
\end{pmatrix}=r\begin{pmatrix}\widehat{B_{j}}\\\widehat{D_{j}}\end{pmatrix}
\Longleftrightarrow
\end{align*}
\begin{align*}
r\begin{pmatrix}
R_{N_{j}}L_{\begin{pmatrix}\begin{smallmatrix}R_{D_{j}}\\-R_{B_{j+1}}\end{smallmatrix}\end{pmatrix}}\\
R_{\begin{pmatrix}\begin{smallmatrix}L_{M_{j}}L_{S_{j}}&-L_{A_{j+1}}\end{smallmatrix}\end{pmatrix}}F_{j}
L_{\begin{pmatrix}\begin{smallmatrix}R_{D_{j}}\\-R_{B_{j+1}}\end{smallmatrix}\end{pmatrix}}\\
R_{N_{j+1}}D_{j+1}B_{j+1}^{\dag}L_{\begin{pmatrix}\begin{smallmatrix}R_{D_{j}}\\-R_{B_{j+1}}\end{smallmatrix}\end{pmatrix}}
\end{pmatrix}=r\begin{pmatrix}
R_{N_{j}}L_{\begin{pmatrix}\begin{smallmatrix}R_{D_{j}}\\-R_{B_{j+1}}\end{smallmatrix}\end{pmatrix}}\\
R_{N_{j+1}}D_{j+1}B_{j+1}^{\dag}L_{\begin{pmatrix}\begin{smallmatrix}R_{D_{j}}\\-R_{B_{j+1}}\end{smallmatrix}\end{pmatrix}}
\end{pmatrix}
\end{align*}
\begin{align*}
\xLeftrightarrow{\mbox{Add} ~\begin{pmatrix}\begin{smallmatrix}L_{M_{j}}L_{S_{j}}&-L_{A_{j+1}}\end{smallmatrix}\end{pmatrix},~
\begin{pmatrix}\begin{smallmatrix}R_{D_{j}}\\-R_{B_{j+1}}\end{smallmatrix}\end{pmatrix}~\mbox{to both sides} }
\end{align*}
\begin{align*}
r\begin{pmatrix}
F_{j}&L_{M_{j}}L_{S_{j}}&-L_{A_{j+1}}\\
R_{N_{j}}&0&0\\
R_{N_{j+1}}D_{j+1}B_{j+1}^{\dag}&0&0\\
R_{D_{j}}&0&0\\
R_{B_{j+1}}&0&0
\end{pmatrix}=
r\begin{pmatrix}
L_{M_{j}}L_{S_{j}}&-L_{A_{j+1}}
\end{pmatrix}+
r\begin{pmatrix}
R_{N_{j}}\\
R_{N_{j+1}}D_{j+1}B_{j+1}^{\dag}\\
R_{D_{j}}\\
R_{B_{j+1}}
\end{pmatrix}
\end{align*}
\begin{align*}
\xLeftrightarrow{\mbox{Use} ~(\ref{march19equ443}) ~\mbox{and} ~(\ref{march19equ444}) }
\end{align*}
\begin{align*}
r\begin{pmatrix}
F_{j}&L_{M_{j}}L_{S_{j}}&L_{A_{j+1}}\\
R_{N_{j}}&0&0\\
R_{N_{j+1}}D_{j+1}B_{j+1}^{\dag}&0&0\\
R_{B_{j+1}}&0&0
\end{pmatrix}=
r\begin{pmatrix}
L_{M_{j}}L_{S_{j}}&L_{A_{j+1}}
\end{pmatrix}+
r\begin{pmatrix}
R_{N_{j}}\\
R_{N_{j+1}}D_{j+1}B_{j+1}^{\dag}\\
R_{B_{j+1}}
\end{pmatrix}
\end{align*}
\begin{align*}
\xLeftrightarrow{\mbox{Add} ~A_{j+1},~B_{j+1},~S_{j},~N_{j}~\mbox{to both sides}}
\end{align*}
\begin{align*}
r\begin{pmatrix}
F_{j}&L_{M_{j}}&I&0&0\\
I&0&0&N_{j}&0\\
R_{N_{j+1}}D_{j+1}B_{j+1}^{\dag}&0&0&0&0\\
I&0&0&0&B_{j+1}\\
0&S_{j}&0&0&0\\
0&0&A_{j+1}&0&0
\end{pmatrix}=
r\begin{pmatrix}
L_{M_{j}}&I\\
S_{j}&0\\
0&A_{j+1}
\end{pmatrix}+
r\begin{pmatrix}
I&N_{j}&0\\
R_{N_{j+1}}D_{j+1}B_{j+1}^{\dag}&0&0\\
I&0&B_{j+1}
\end{pmatrix}
\end{align*}
\begin{align*}
\xLeftrightarrow{\mbox{Use}~(\ref{march19equ445})~\mbox{and elementary operations}}
\end{align*}
\begin{align*}
r\begin{pmatrix}
F_{j}&L_{M_{j}}&I&0&0\\
I&0&0&N_{j}&0\\
0&0&0&0&R_{N_{j+1}}D_{j+1}\\
I&0&0&0&B_{j+1}\\
0&C_{j}L_{M_{j}}&0&0&0\\
0&0&A_{j+1}&0&0
\end{pmatrix}=
r\begin{pmatrix}
L_{M_{j}}&I\\
C_{j}L_{M_{j}}&0\\
0&A_{j+1}
\end{pmatrix}+
r\begin{pmatrix}
I&N_{j}&0\\
0&0&R_{N_{j+1}}D_{j+1}\\
I&0&B_{j+1}
\end{pmatrix}
\end{align*}
\begin{align*}
\xLeftrightarrow{\mbox{Add}~M_{j}~\mbox{to both sides}}
\end{align*}
\begin{align*}
r\begin{pmatrix}
F_{j}&I&I&0&0\\
I&0&0&D_{j}L_{B_{j}}&0\\
0&0&0&0&R_{N_{j+1}}D_{j+1}\\
I&0&0&0&B_{j+1}\\
0&C_{j}&0&0&0\\
0&0&A_{j+1}&0&0
\end{pmatrix}=
r\begin{pmatrix}
I&I\\
C_{j}&0\\
0&A_{j+1}
\end{pmatrix}+
r\begin{pmatrix}
I&D_{j}L_{B_{j}}&0\\
0&0&R_{N_{j+1}}D_{j+1}\\
I&0&B_{j+1}
\end{pmatrix}
\end{align*}
\begin{align*}
\xLeftrightarrow{\mbox{Add}~N_{j+1}~\mbox{and}~B_{j}~\mbox{to both sides}}
\end{align*}
\begin{align*}
r\begin{pmatrix}
F_{j}&I&I&0&0\\
I&0&0&D_{j}&0\\
0&0&0&0&D_{j+1}\\
I&0&0&0&B_{j+1}\\
0&C_{j}&0&0&0\\
0&0&A_{j+1}&0&0\\
0&0&0&B_{j}&0
\end{pmatrix}=
r\begin{pmatrix}
I&I\\
C_{j}&0\\
0&A_{j+1}
\end{pmatrix}+
r\begin{pmatrix}
I&D_{j}&0\\
0&0&D_{j+1}\\
I&0&B_{j+1}\\
0&B_{j}&0
\end{pmatrix}
\xLeftrightarrow{\mbox{Use}~(\ref{march18equ438})}
\end{align*}
\begin{align*}
r\begin{pmatrix}
X_{j+1}^{2}-X_{j+1}^{1}&I&I&0&0\\
I&0&0&D_{j}&0\\
0&0&0&0&D_{j+1}\\
I&0&0&0&B_{j+1}\\
0&C_{j}&0&0&0\\
0&0&A_{j+1}&0&0\\
0&0&0&B_{j}&0
\end{pmatrix}=
r\begin{pmatrix}
I&I\\
C_{j}&0\\
0&A_{j+1}
\end{pmatrix}+
r\begin{pmatrix}
I&D_{j}&0\\
0&0&D_{j+1}\\
I&0&B_{j+1}\\
0&B_{j}&0
\end{pmatrix}
\end{align*}
\begin{align*}
\xLeftrightarrow{\mbox{Use elementary operations}}
\end{align*}
\begin{align*}
r
\begin{pmatrix}
B_{j}\\
E_{j}&C_{j}\\
D_{j}& &B_{j+1}\\
 &A_{j+1}&-E_{j+1}\\
 & & D_{j+1}
\end{pmatrix}=r
\begin{pmatrix}
C_{j}\\
 A_{j+1}
\end{pmatrix}+r
\begin{pmatrix}
B_{j}\\
D_{j}&B_{j+1}\\
& D_{j+1}
\end{pmatrix}\xlongequal{\mbox{Put} ~m=j ~ \mbox{and} ~n=j+1~\mbox{in}~(\ref{march15equ035})}~ (\ref{march15equ035}).
\end{align*}
Similarly, it can be found that
\begin{align*}
(\ref{march17equ426}) \Longleftrightarrow (\ref{march15equ036}),~ (m=j,~n=j+1),
\end{align*}
\begin{align*}
(\ref{march19equ429}) \Longleftrightarrow (\ref{march15equ037}),~ (m=j,~n=j+1).
\end{align*}

\textbf{Step 3.} We will prove that (\ref{march17equ427}) $\Longleftrightarrow$ (\ref{march15equ034}) for the case $n-m>1$. First, we only deal with $\widehat{A_{m}}, ~ \widehat{C_{m}},~\widehat{A_{m+1}}, ~\widehat{D_{m}},~\widehat{B_{m+1}},~ \widehat{E_{m}}$ in (\ref{march17equ427}). We want to find some rules. Applying Lemma \ref{hlemma03} to $\widehat{A_{m}}, ~ \widehat{C_{m}},~\widehat{A_{m+1}}, ~\widehat{D_{m}},~\widehat{B_{m+1}},~ \widehat{E_{m}}$ in (\ref{march17equ427}) that
\begin{align*}
&r\begin{pmatrix}
\widehat{A_{m}}&\widehat{E_{m}}&\widehat{C_{m}}\\
 &\widehat{D_{m}}& &\widehat{B_{m+1}}\\
 & & \widehat{A_{m+1}}&-\widehat{E_{m+1}}&\widehat{C_{m+1}}\\
  & & & \ddots & \ddots & \ddots\\
  & & & & \widehat{A_{n}}&(-1)^{n-m}\widehat{E_{n}}&\widehat{C_{n}}
\end{pmatrix}\\&=r
\begin{pmatrix}
\widehat{A_{m}}&\widehat{C_{m}}\\
& \widehat{A_{m+1}}&\widehat{C_{m+1}}\\
& & \ddots&\ddots\\
& & & \widehat{A_{n}}&\widehat{C_{n}}
\end{pmatrix}+r
\begin{pmatrix}
\widehat{D_{m}}&\widehat{B_{m+1}}\\
& \widehat{D_{m+1}}&\widehat{B_{m+2}}\\
& & \ddots&\ddots\\
& & & \widehat{D_{n-1}}&\widehat{B_{n}}
\end{pmatrix},
\end{align*}
\begin{align*}
\xLeftrightarrow{\mbox{Replace}~\widehat{A_{m}}, ~ \widehat{C_{m}},~\widehat{A_{m+1}}, ~\widehat{D_{m}},~\widehat{B_{m+1}},~ \widehat{E_{m}}~\mbox{by}~(\ref{may28equ421})-(\ref{may28equ423})}
\end{align*}
\tiny
\begin{align*}
&r\begin{pmatrix}\begin{smallmatrix}
R_{\begin{pmatrix}\begin{smallmatrix}L_{M_{m}}L_{S_{m}}&-L_{A_{m+1}}\end{smallmatrix}\end{pmatrix}}L_{M_{m}}&
R_{\begin{pmatrix}\begin{smallmatrix}L_{M_{m}}L_{S_{m}}&-L_{A_{m+1}}\end{smallmatrix}\end{pmatrix}}F_{m}
L_{\begin{pmatrix}\begin{smallmatrix}R_{D_{m}}\\-R_{B_{m+1}}\end{smallmatrix}\end{pmatrix}}&
R_{\begin{pmatrix}\begin{smallmatrix}L_{M_{m}}L_{S_{m}}&-L_{A_{m+1}}\end{smallmatrix}\end{pmatrix}}A_{m+1}^{\dag}S_{m+1}\\
 &R_{N_{m+1}}D_{m+1}B_{m+1}^{\dag}L_{\begin{pmatrix}\begin{smallmatrix}R_{D_{m}}\\-R_{B_{m+1}}\end{smallmatrix}\end{pmatrix}}& &
 R_{N_{m+1}}L_{\begin{pmatrix}\begin{smallmatrix}R_{D_{m+1}}\\-R_{B_{m+2}}\end{smallmatrix}\end{pmatrix}}\\
 & & R_{\begin{pmatrix}\begin{smallmatrix}L_{M_{m+1}}L_{S_{m+1}}&-L_{A_{m+2}}\end{smallmatrix}\end{pmatrix}}L_{M_{m+1}}&-\widehat{E_{m+1}}&\widehat{C_{m+1}}\\
 & & \ddots&\ddots&\ddots\end{smallmatrix}
\end{pmatrix}\\&
=r
\begin{pmatrix}
R_{\begin{pmatrix}\begin{smallmatrix}L_{M_{m}}L_{S_{m}}&-L_{A_{m+1}}\end{smallmatrix}\end{pmatrix}}L_{M_{m}}&
R_{\begin{pmatrix}\begin{smallmatrix}L_{M_{m}}L_{S_{m}}&-L_{A_{m+1}}\end{smallmatrix}\end{pmatrix}}A_{m+1}^{\dag}S_{m+1}\\
& R_{\begin{pmatrix}\begin{smallmatrix}L_{M_{m+1}}L_{S_{m+1}}&-L_{A_{m+2}}\end{smallmatrix}\end{pmatrix}}L_{M_{m+1}}&\widehat{C_{m+1}}\\
& & \ddots&\ddots
\end{pmatrix}\\&+r
\begin{pmatrix}
R_{N_{m+1}}D_{m+1}B_{m+1}^{\dag}L_{\begin{pmatrix}\begin{smallmatrix}R_{D_{m}}\\-R_{B_{m+1}}\end{smallmatrix}\end{pmatrix}}&
 R_{N_{m+1}}L_{\begin{pmatrix}\begin{smallmatrix}R_{D_{m+1}}\\-R_{B_{m+2}}\end{smallmatrix}\end{pmatrix}}\\
& \widehat{D_{m+1}}&\widehat{B_{m+2}}\\
& & \ddots&\ddots
\end{pmatrix},
\end{align*}\normalsize
\begin{align*}
\xLeftrightarrow{\mbox{Add}~M_{m},~\begin{pmatrix}\begin{smallmatrix}L_{M_{m}}L_{S_{m}}&-L_{A_{m+1}}\end{smallmatrix}\end{pmatrix}~\mbox{and}~ \begin{pmatrix}\begin{smallmatrix}R_{D_{m}}\\-R_{B_{m+1}}\end{smallmatrix}\end{pmatrix}~\mbox{to both sides}}
\end{align*}
\begin{align*}
&r\begin{pmatrix}\begin{smallmatrix}
I&F_{m}&A_{m+1}^{\dag}S_{m+1}&L_{M_{m}}L_{S_{m}}&L_{A_{m+1}}&0&0\\
0&R_{N_{m+1}}D_{m+1}B_{m+1}^{\dag}&0&0&0&R_{N_{m+1}}L_{\begin{pmatrix}\begin{smallmatrix}R_{D_{m+1}}\\-R_{B_{m+2}}\end{smallmatrix}\end{pmatrix}}&0\\
0&R_{D_{m}}&0&0&0&0&0\\
0&R_{B_{m+1}}&0&0&0&0&0\\
M_{m}&0&0&0&0&0&0\\
0&0&R_{\begin{pmatrix}\begin{smallmatrix}L_{M_{m+1}}L_{S_{m+1}}&-L_{A_{m+2}}\end{smallmatrix}\end{pmatrix}}L_{M_{m+1}}&0&0&-\widehat{E_{m+1}}&\widehat{C_{m+1}}\\
&&&&&\ddots&\ddots
\end{smallmatrix}
\end{pmatrix}\\
&=r\begin{pmatrix}\begin{smallmatrix}
I&A_{m+1}^{\dag}S_{m+1}&L_{M_{m}}L_{S_{m}}&L_{A_{m+1}}&0\\
M_{m}&0&0&0&0\\
0&R_{\begin{pmatrix}\begin{smallmatrix}L_{M_{m+1}}L_{S_{m+1}}&-L_{A_{m+2}}\end{smallmatrix}\end{pmatrix}}L_{M_{m+1}}&0&0&\widehat{C_{m+1}}\\
&& & \ddots&\ddots
\end{smallmatrix}\end{pmatrix}\\&+r\begin{pmatrix}\begin{smallmatrix}
R_{N_{m+1}}D_{m+1}B_{m+1}^{\dag}&R_{N_{m+1}}L_{\begin{pmatrix}\begin{smallmatrix}R_{D_{m+1}}\\-R_{B_{m+2}}\end{smallmatrix}\end{pmatrix}}\\
R_{D_{m}}&0\\
R_{B_{m+1}}&0\\
&\ddots&\ddots
\end{smallmatrix}
\end{pmatrix}
\end{align*}
\begin{align*}
\xLeftrightarrow{\mbox{Use~ elementary ~operations}}
\end{align*}
\begin{align*}
&r\begin{pmatrix}
I&F_{m}&A_{m+1}^{\dag}S_{m+1}&L_{A_{m+1}}&0&0\\
0&R_{N_{m+1}}D_{m+1}B_{m+1}^{\dag}&0&0&R_{N_{m+1}}L_{\begin{pmatrix}\begin{smallmatrix}R_{D_{m+1}}\\-R_{B_{m+2}}\end{smallmatrix}\end{pmatrix}}&0\\
0&R_{D_{m}}&0&0&0&0\\
0&R_{B_{m+1}}&0&0&0&0\\
M_{m}&0&0&0&0&0\\
0&0&R_{\begin{pmatrix}\begin{smallmatrix}L_{M_{m+1}}L_{S_{m+1}}&-L_{A_{m+2}}\end{smallmatrix}\end{pmatrix}}L_{M_{m+1}}&0&-\widehat{E_{m+1}}&\widehat{C_{m+1}}\\
&&&&&\ddots&\ddots
\end{pmatrix}\\&
=r\begin{pmatrix}
I&A_{m+1}^{\dag}S_{m+1}&L_{A_{m+1}}&0\\
M_{m}&0&0&0\\
0&R_{\begin{pmatrix}\begin{smallmatrix}L_{M_{m+1}}L_{S_{m+1}}&-L_{A_{m+2}}\end{smallmatrix}\end{pmatrix}}L_{M_{m+1}}&0&\widehat{C_{m+1}}\\
&&&&\ddots&\ddots
\end{pmatrix}\\&+
r\begin{pmatrix}
R_{N_{m+1}}D_{m+1}B_{m+1}^{\dag}&R_{N_{m+1}}L_{\begin{pmatrix}\begin{smallmatrix}R_{D_{m+1}}\\-R_{B_{m+2}}\end{smallmatrix}\end{pmatrix}}&0\\
R_{D_{m}}&0&0\\
R_{B_{m+1}}&0&0\\
&&&\ddots&\ddots
\end{pmatrix}
\end{align*}
\begin{align*}
\xLeftrightarrow{\mbox{Add}~A_{m+1},~A_{m},~D_{m},~\mbox{and}~B_{m+1}~\mbox{to both sides}}
\end{align*}\tiny
\begin{align*}
&r\begin{pmatrix}
I&F_{m}&A_{m+1}^{\dag}S_{m+1}&I&0&0&0&0&0\\
0&R_{N_{m+1}}D_{m+1}B_{m+1}^{\dag}&0&0&0&0&0&R_{N_{m+1}}L_{\begin{pmatrix}\begin{smallmatrix}R_{D_{m+1}}\\-R_{B_{m+2}}\end{smallmatrix}\end{pmatrix}}&0\\
0&I&0&0&D_{m}&0&0&0&0\\
0&I&0&0&0&B_{m+1}&0&0&0\\
C_{m}&0&0&0&0&0&A_{m}&0&0\\
0&0&0&A_{m+1}&0&0&0&0&0\\
0&0&R_{\begin{pmatrix}\begin{smallmatrix}L_{M_{m+1}}L_{S_{m+1}}&-L_{A_{m+2}}\end{smallmatrix}\end{pmatrix}}L_{M_{m+1}}&0&0&0&0&-\widehat{E_{m+1}}&\widehat{C_{m+1}}\\
&&&&&&&\ddots&\ddots
\end{pmatrix}\\
&=r\begin{pmatrix}
I&A_{m+1}^{\dag}S_{m+1}&I&0&0\\
C_{m}&0&0&A_{m}&0\\
0&0&A_{m+1}&0&0\\
0&R_{\begin{pmatrix}\begin{smallmatrix}L_{M_{m+1}}L_{S_{m+1}}&-L_{A_{m+2}}\end{smallmatrix}\end{pmatrix}}L_{M_{m+1}}&0&0&\widehat{C_{m+1}}
\\
&&&&\ddots&\ddots
\end{pmatrix}\\&+r\begin{pmatrix}
R_{N_{m+1}}D_{m+1}B_{m+1}^{\dag}&0&0&0&R_{N_{m+1}}L_{\begin{pmatrix}\begin{smallmatrix}R_{D_{m+1}}\\-R_{B_{m+2}}\end{smallmatrix}\end{pmatrix}}\\
I&D_{m}&0&0&0\\
I&0&B_{m+1}&0&0\\
&&&\ddots&\ddots
\end{pmatrix}
\end{align*}
\normalsize
\begin{align*}
\xLeftrightarrow{\mbox{Use}~(\ref{march17ajsj})~\mbox{and}~(\ref{march19equ445})}
\end{align*}\tiny
\begin{align*}
&r\begin{pmatrix}
I&F_{m}&0&I&0&0&0&0\\
0&0&0&0&0&-R_{N_{m+1}}D_{m+1}&0&R_{N_{m+1}}L_{\begin{pmatrix}\begin{smallmatrix}R_{D_{m+1}}\\-R_{B_{m+2}}\end{smallmatrix}\end{pmatrix}}\\
0&I&0&0&D_{m}&0&0&0\\
0&I&0&0&0&B_{m+1}&0&0\\
C_{m}&0&0&0&0&0&A_{m}&0\\
0&0&-S_{m+1}&A_{m+1}&0&0&0&0\\
0&0&R_{\begin{pmatrix}\begin{smallmatrix}L_{M_{m+1}}L_{S_{m+1}}&-L_{A_{m+2}}\end{smallmatrix}\end{pmatrix}}L_{M_{m+1}}&0&0&0&0&-\widehat{E_{m+1}}&\widehat{C_{m+1}}\\
&&&&&&&\ddots&\ddots
\end{pmatrix}\\&=
r\begin{pmatrix}
I&0&I&0&0\\
C_{m}&0&0&A_{m}&0\\
0&-S_{m+1}&A_{m+1}&0&0\\
0&R_{\begin{pmatrix}\begin{smallmatrix}L_{M_{m+1}}L_{S_{m+1}}&-L_{A_{m+2}}\end{smallmatrix}\end{pmatrix}}L_{M_{m+1}}&0&0&\widehat{C_{m+1}}\\
&&&&\ddots&\ddots
\end{pmatrix}\\&+r\begin{pmatrix}
0&0&-R_{N_{m+1}}D_{m+1}&R_{N_{m+1}}L_{\begin{pmatrix}\begin{smallmatrix}R_{D_{m+1}}\\-R_{B_{m+2}}\end{smallmatrix}\end{pmatrix}}\\
I&D_{m}&0&0\\
I&0&B_{m+1}&0\\
&&\ddots&\ddots
\end{pmatrix}
\end{align*}
\normalsize
\begin{align*}
\xLeftrightarrow{\mbox{Add}~N_{m+1}~\mbox{and}~M_{m+1} ~\mbox{to both sides}}
\end{align*}\tiny
\begin{align*}
&r\begin{pmatrix}
I&F_{m}&0&I&0&0&0&0&0&0\\
0&0&0&0&0&-D_{m+1}&N_{m+1}&0&L_{\begin{pmatrix}\begin{smallmatrix}R_{D_{m+1}}\\-R_{B_{m+2}}\end{smallmatrix}\end{pmatrix}}&0\\
0&I&0&0&D_{m}&0&0&0&0&0\\
0&I&0&0&0&B_{m+1}&0&0&0&0\\
C_{m}&0&0&0&0&0&0&A_{m}&0&0\\
0&0&-C_{m+1}&A_{m+1}&0&0&0&0&0&0\\
0&0&M_{m+1}&0&0&0&0&0&0&0\\
0&0&R_{\begin{pmatrix}\begin{smallmatrix}L_{M_{m+1}}L_{S_{m+1}}&-L_{A_{m+2}}\end{smallmatrix}\end{pmatrix}}&0&0&0&0&0&-\widehat{E_{m+1}}&\widehat{C_{m+1}}\\
&&&&&&&\ddots&\ddots
\end{pmatrix}\\&=
r\begin{pmatrix}
I&0&I&0&0\\
C_{m}&0&0&A_{m}&0\\
0&-C_{m+1}&A_{m+1}&0&0\\
0&M_{m+1}&0&0&0\\
0&R_{\begin{pmatrix}\begin{smallmatrix}L_{M_{m+1}}L_{S_{m+1}}&-L_{A_{m+2}}\end{smallmatrix}\end{pmatrix}}&0&0&\widehat{C_{m+1}}\\
&&&\ddots&\ddots
\end{pmatrix}\\&+
r\begin{pmatrix}
0&0&-D_{m+1}&N_{m+1}&L_{\begin{pmatrix}\begin{smallmatrix}R_{D_{m+1}}\\-R_{B_{m+2}}\end{smallmatrix}\end{pmatrix}}\\
I&D_{m}&0&0&0\\
I&0&B_{m+1}&0&0\\
&&&\ddots&\ddots
\end{pmatrix}
\end{align*}
\normalsize
\begin{align*}
\xLeftrightarrow{\mbox{Use~ elementary ~operations}}
\end{align*}\footnotesize
\begin{align*}
&r\begin{pmatrix}
I&F_{m}&0&I&0&0&0&0&0\\
0&0&0&0&0&-D_{m+1}&0&L_{\begin{pmatrix}\begin{smallmatrix}R_{D_{m+1}}\\-R_{B_{m+2}}\end{smallmatrix}\end{pmatrix}}&0\\
0&I&0&0&D_{m}&0&0&0&0\\
0&I&0&0&0&B_{m+1}&0&0&0\\
C_{m}&0&0&0&0&0&A_{m}&0&0\\
0&0&-C_{m+1}&A_{m+1}&0&0&0&0&0\\
0&0&R_{\begin{pmatrix}\begin{smallmatrix}L_{M_{m+1}}L_{S_{m+1}}&-L_{A_{m+2}}\end{smallmatrix}\end{pmatrix}}&0&0&0&0&-\widehat{E_{m+1}}&\widehat{C_{m+1}}\\
&&&&&&&&\ddots&\ddots
\end{pmatrix}\\&=
r\begin{pmatrix}
I&0&I&0&0\\
C_{m}&0&0&A_{m}&0\\
0&-C_{m+1}&A_{m+1}&0&0\\
0&R_{\begin{pmatrix}\begin{smallmatrix}L_{M_{m+1}}L_{S_{m+1}}&-L_{A_{m+2}}\end{smallmatrix}\end{pmatrix}}&0&0&\widehat{C_{m+1}}\\
&&&\ddots&\ddots
\end{pmatrix}\\&+
r\begin{pmatrix}
0&0&-D_{m+1}&L_{\begin{pmatrix}\begin{smallmatrix}R_{D_{m+1}}\\-R_{B_{m+2}}\end{smallmatrix}\end{pmatrix}}\\
I&D_{m}&0&0\\
I&0&B_{m+1}&0\\
&&\ddots&\ddots
\end{pmatrix}
\end{align*}
\normalsize
\begin{align*}
\xLeftrightarrow{\mbox{Use}~(\ref{march18equ438})}
\end{align*}\footnotesize
\begin{align*}
&r\begin{pmatrix}
I&X^{2}_{m+1}-X^{1}_{m+1}&0&I&0&0&0&0&0\\
0&0&0&0&0&-D_{m+1}&0&L_{\begin{pmatrix}\begin{smallmatrix}R_{D_{m+1}}\\-R_{B_{m+2}}\end{smallmatrix}\end{pmatrix}}&0\\
0&I&0&0&D_{m}&0&0&0&0\\
0&I&0&0&0&B_{m+1}&0&0&0\\
C_{m}&0&0&0&0&0&A_{m}&0&0\\
0&0&-C_{m+1}&A_{m+1}&0&0&0&0&0\\
0&0&R_{\begin{pmatrix}\begin{smallmatrix}L_{M_{m+1}}L_{S_{m+1}}&-L_{A_{m+2}}\end{smallmatrix}\end{pmatrix}}&0&0&0&0&-\widehat{E_{m+1}}&\widehat{C_{m+1}}\\
&&&&&&&&\ddots&\ddots
\end{pmatrix}\\&=
r\begin{pmatrix}
I&0&I&0&0\\
C_{m}&0&0&A_{m}&0\\
0&-C_{m+1}&A_{m+1}&0&0\\
0&R_{\begin{pmatrix}\begin{smallmatrix}L_{M_{m+1}}L_{S_{m+1}}&-L_{A_{m+2}}\end{smallmatrix}\end{pmatrix}}&0&0&\widehat{C_{m+1}}\\
&&&\ddots&\ddots
\end{pmatrix}\\&+r\begin{pmatrix}
0&0&-D_{m+1}&L_{\begin{pmatrix}\begin{smallmatrix}R_{D_{m+1}}\\-R_{B_{m+2}}\end{smallmatrix}\end{pmatrix}}\\
I&D_{m}&0&0\\
I&0&B_{m+1}&0\\
&&\ddots&\ddots
\end{pmatrix}
\end{align*}\normalsize
\begin{align*}
\xLeftrightarrow{\mbox{Use~ elementary ~operations,~(\ref{march18equ439})~and ~(\ref{march18equ440})}}
\end{align*}\footnotesize
\begin{align*}
&r\begin{pmatrix}
I&0&0&0&0&0&0&0&0\\
0&0&0&0&0&-D_{m+1}&0&L_{\begin{pmatrix}\begin{smallmatrix}R_{D_{m+1}}\\-R_{B_{m+2}}\end{smallmatrix}\end{pmatrix}}&0\\
0&I&0&0&0&0&0&0&0\\
0&0&0&0&-D_{m}&B_{m+1}&0&0&0\\
0&0&0&-C_{m}&-E_{m}&0&A_{m}&0&0\\
0&0&-C_{m+1}&A_{m+1}&0&A_{m+1}X^{2}_{m+1}B_{m+1}&0&0&0\\
0&0&R_{\begin{pmatrix}\begin{smallmatrix}L_{M_{m+1}}L_{S_{m+1}}&-L_{A_{m+2}}\end{smallmatrix}\end{pmatrix}}&0&0&0&0&-\widehat{E_{m+1}}&\widehat{C_{m+1}}\\
&&&&&&&&\ddots&\ddots
\end{pmatrix}\\&=
r\begin{pmatrix}
I&0&0&0\\
0&0&-C_{m}&A_{m}\\
0&-C_{m+1}&A_{m+1}&0\\
0&R_{\begin{pmatrix}\begin{smallmatrix}L_{M_{m+1}}L_{S_{m+1}}&-L_{A_{m+2}}\end{smallmatrix}\end{pmatrix}}&0&0\\
&&\ddots&\ddots
\end{pmatrix}+r\begin{pmatrix}
0&0&-D_{m+1}&L_{\begin{pmatrix}\begin{smallmatrix}R_{D_{m+1}}\\-R_{B_{m+2}}\end{smallmatrix}\end{pmatrix}}\\
I&0&0&0\\
0&-D_{m}&B_{m+1}&0\\
&&\ddots&\ddots
\end{pmatrix}
\end{align*}\normalsize
\begin{align*}
\Longleftrightarrow
\end{align*}
\begin{align}\label{may28equ446}
&r\begin{pmatrix}
A_{m}&E_{m}&C_{m}\\
&D_{m}& & B_{m+1}\\
&&A_{m+1}&-A_{m+1}X_{m+1}^{2}B_{m+1}&C_{m+1}\\
&&&D_{m+1}&&-L_{\begin{pmatrix}\begin{smallmatrix}R_{D_{m+1}}\\-R_{B_{m+2}}\end{smallmatrix}\end{pmatrix}}\\
&&&&R_{\begin{pmatrix}\begin{smallmatrix}L_{M_{m+1}}L_{S_{m+1}}&-L_{A_{m+2}}\end{smallmatrix}\end{pmatrix}}&-\widehat{E_{m+1}}&\widehat{C_{m+1}}\\
&&&&&\widehat{D_{m+1}}&&\widehat{B_{m+2}}\\
&&&&&&\widehat{A_{m+2}}\\
&&&&&&&\ddots&\ddots
\end{pmatrix}\nonumber\\&=
r\begin{pmatrix}
A_{m}&C_{m}\\
&A_{m+1}&C_{m+1}\\
&&R_{\begin{pmatrix}\begin{smallmatrix}L_{M_{m+1}}L_{S_{m+1}}&-L_{A_{m+2}}\end{smallmatrix}\end{pmatrix}}&\widehat{C_{m+1}}\\
&&&\widehat{A_{m+2}}&\ddots
\end{pmatrix}\nonumber\\&+r\begin{pmatrix}
D_{m}& & B_{m+1}\\
&&D_{m+1}&&-L_{\begin{pmatrix}\begin{smallmatrix}R_{D_{m+1}}\\-R_{B_{m+2}}\end{smallmatrix}\end{pmatrix}}\\
&&&&\widehat{D_{m+1}}&\widehat{B_{m+2}}\\
&&&&&\ddots&\ddots
\end{pmatrix}
\end{align}\normalsize
Continuing in this way, we obtain that
\begin{align*}
(\ref{may28equ446})\Longleftrightarrow
\end{align*}\footnotesize
\begin{align*}
&r\begin{pmatrix}
A_{m}&E_{m}&C_{m}\\
&D_{m}& & B_{m+1}\\
&&A_{m+1}&-E_{m+1}&C_{m+1}\\
&&&\ddots&\ddots&\ddots\\
& & & &D_{n-1}& &B_{n}\\
& & & &&A_{n} & (-1)^{n-m}A_{n}X_{n}^{2}B_{n}& C_{n}\\
& & & &&& D_{n} & & -L_{\begin{pmatrix}\begin{smallmatrix}R_{D_{n}}\\-R_{B_{n+1}}\end{smallmatrix}\end{pmatrix}}\\
&&&&&&&R_{\begin{pmatrix}\begin{smallmatrix}L_{M_{n}}L_{S_{n}}&-L_{A_{n+1}}\end{smallmatrix}\end{pmatrix}}&(-1)^{n-m}\widehat{E_{n}}&\widehat{C_{n}}
\end{pmatrix}\\&=\begin{pmatrix}
A_{m}&C_{m}\\
& A_{m+1}&C_{m+1}\\
& & \ddots&\ddots\\
& & & A_{n}&C_{n}\\
&&&&R_{\begin{pmatrix}\begin{smallmatrix}L_{M_{n}}L_{S_{n}}&-L_{A_{n+1}}\end{smallmatrix}\end{pmatrix}}&\widehat{C_{n}}
\end{pmatrix}+r
\begin{pmatrix}
D_{m}&B_{m+1}\\
& D_{m+1}&B_{m+2}\\
& & \ddots&\ddots\\
& & & D_{n}&L_{\begin{pmatrix}\begin{smallmatrix}R_{D_{n}}\\-R_{B_{n+1}}\end{smallmatrix}\end{pmatrix}}
\end{pmatrix}
\end{align*}
\normalsize
\begin{align*}
\xLeftrightarrow{\mbox{Add}~\begin{pmatrix}\begin{smallmatrix}L_{M_{n}}L_{S_{n}}&-L_{A_{n+1}}\end{smallmatrix}\end{pmatrix}~\mbox{and}~
\begin{pmatrix}\begin{smallmatrix}R_{D_{n}}\\-R_{B_{n+1}}\end{smallmatrix}\end{pmatrix} ~\mbox{to both sides}}
\end{align*}\footnotesize
\begin{align*}
&r\begin{pmatrix}
A_{m}&E_{m}&C_{m}\\
&D_{m}& & B_{m+1}\\
&&A_{m+1}&-E_{m+1}&C_{m+1}\\
&&&\ddots&\ddots&\ddots\\
& & & &D_{n-1}& &B_{n}\\
& & & &&A_{n} & (-1)^{n-m}A_{n}X_{n}^{2}B_{n}& C_{n}\\
& & & &&& D_{n} & & I\\
&&&&&&&I&(-1)^{n-m+1}F_{n}&A^{\dag}_{n+1}S_{n+1}&L_{A_{n+1}}\\
&&&&&&&&R_{D_{n}}&\\
&&&&&&&&R_{B_{n+1}}&
\end{pmatrix}\\&=r\begin{pmatrix}
A_{m}&C_{m}\\
& A_{m+1}&C_{m+1}\\
& & \ddots&\ddots\\
& & & A_{n}&C_{n}\\
&&&&I& A^{\dag}_{n+1}S_{n+1}&L_{A_{n+1}}
\end{pmatrix}+r
\begin{pmatrix}
D_{m}&B_{m+1}\\
& D_{m+1}&B_{m+2}\\
& & \ddots&\ddots\\
& & & D_{n}&I\\
&&&&R_{D_{n}}\\
&&&&R_{B_{n+1}}
\end{pmatrix}
\end{align*}
\normalsize
\begin{align*}
\xLeftrightarrow{\mbox{Add}~D_{n}~\mbox{and}~B_{n+1} ~\mbox{to both sides and use (\ref{march17ajsj}), (\ref{march18equ439}) and (\ref{march18equ440})}}
\end{align*}\footnotesize
\begin{align*}
&r\begin{pmatrix}
A_{m}&E_{m}&C_{m}\\
&\ddots&\ddots&\ddots\\
 & &D_{n-1}& &B_{n}\\
 & &&A_{n} & (-1)^{n-m}A_{n}X_{n}^{2}B_{n}& C_{n}\\
 & &&& D_{n} & & I\\
&&&&&I&(-1)^{n-m+1}F_{n}&&I\\
&&&&&&I&&&D_{n}\\
&&&&&&I&&&&B_{n+1}\\
&&&&&&&C_{n+1}&A_{n+1}\\
\end{pmatrix}\\&=r\begin{pmatrix}
A_{m}&C_{m}\\
& A_{m+1}&C_{m+1}\\
& & \ddots&\ddots\\
& & & A_{n}&C_{n}\\
&&&&I& &I\\
&&&&& C_{n+1}&A_{n+1}
\end{pmatrix}+r
\begin{pmatrix}
D_{m}&B_{m+1}\\
& D_{m+1}&B_{m+2}\\
& & \ddots&\ddots\\
& & & D_{n}&I\\
&&&&I&D_{n}\\
&&&&&&B_{n+1}
\end{pmatrix}
\end{align*}\normalsize
\begin{align*}
\xLeftrightarrow{\mbox{Use~ elementary ~operations}}
\end{align*}
\begin{align*}
&r\begin{pmatrix}
A_{m}&E_{m}&C_{m}\\
&\ddots&\ddots&\ddots\\
 & &D_{n-1}& &B_{n}\\
 & &&A_{n} & (-1)^{n-m}E_{n}& C_{n}\\
 & &&& D_{n} & & B_{n+1}\\
&&&&&A_{n+1}&(-1)^{n-m+1}E_{n+1}&C_{n+1}
\end{pmatrix}\\&=r\begin{pmatrix}
A_{m}&C_{m}\\
& A_{m+1}&C_{m+1}\\
& & \ddots&\ddots\\
& & & A_{n+1}&C_{n+1}
\end{pmatrix}+r
\begin{pmatrix}
D_{m}&B_{m+1}\\
& D_{m+1}&B_{m+2}\\
& & \ddots&\ddots\\
& & & D_{n}&B_{n+1}
\end{pmatrix}
\end{align*}
\begin{align*}
\Longleftrightarrow (\ref{march15equ034}),~(n-m>1).
\end{align*}
Similarly, it can be found that
\begin{align*}
(\ref{march17equ428}) \Longleftrightarrow (\ref{march15equ035}),~(n-m>1),
\end{align*}
\begin{align*}
(\ref{march17equ429}) \Longleftrightarrow (\ref{march15equ036}),~(n-m>1),
\end{align*}
\begin{align*}
(\ref{march17equ430}) \Longleftrightarrow (\ref{march15equ037}),~(n-m>1).
\end{align*}

As special cases of Theorem \ref{maintheorem}, solvability conditions to the following systems of one-sided Sylvester-type quaternion matrix equations can be given
\begin{align}
\begin{cases}
A_{1}X_{1}+X_{2}D_{1}=E_{1},\\
A_{2}X_{2}+X_{3}D_{2}=E_{2},\\
A_{3}X_{3}+X_{4}D_{3}=E_{3},\\
\quad \quad \vdots\\
A_{k}X_{k}+X_{k+1}D_{k}=E_{k},
\end{cases}
\end{align}
\begin{align}
\begin{cases}
A_{1}X_{1}+X_{2}D_{1}=E_{1},\\
A_{2}X_{3}+X_{2}D_{2}=E_{2},\\
A_{3}X_{3}+X_{4}D_{3}=E_{3},\\
\quad \quad \vdots\\
A_{k}X_{2k+1}+X_{2k}D_{k}=E_{2k},
\end{cases}
\end{align}
\begin{align}
A_{i}X_{k}-X_{j}B_{i}=C_{i},~i=1,\ldots,n,~k\neq j,~k,j\in \{i,i+1\}.
\end{align}
Some authors have considered the solvability conditions to one-sided Sylvester-type matrix equations (e.g., \cite{Dmytryshyn}, \cite{Flanders}, \cite{Roth}, \cite{H.K.Wimmer}).

\section{\textbf{Solvability conditions to the system (\ref{applicationsystem})}}
In this section, we use Theorem \ref{maintheorem} to give some solvability conditions to the system of quaternion matrix equations involving $\eta$-Hermicity
\begin{align}\label{may29equ061}
\begin{cases}
&A_{1}X_{1}A^{\eta*}_{1}+C_{1}X_{2}C^{\eta*}_{1}=E_{1},\\
&\qquad \qquad \quad A_{2}X_{2}A^{\eta*}_{2}+C_{2}X_{3}C^{\eta*}_{2}=E_{2},\\
&\qquad \qquad \quad \qquad \qquad \quad A_{3}X_{3}A^{\eta*}_{3}+C_{3}X_{4}C^{\eta*}_{3}=E_{3},\\
&\qquad \qquad \quad \qquad \qquad\qquad\qquad\qquad\ddots\\
&\qquad \qquad \quad \qquad \qquad\qquad\qquad\qquad A_{k}X_{k}A^{\eta*}_{k}+C_{k}X_{k+1}C^{\eta*}_{k}=E_{k},
\end{cases}~X_{i}=X^{\eta*}_{i}.
\end{align}
 At first, we give the definition of $\eta$-Hermitian quaternion matrix.
\begin{definition}[$\eta$-Hermitian Matrix]
\cite{Took4} For $\eta\in\{\mathbf{i},\mathbf{j},\mathbf{k}\}$, a quaternion matrix $A$ is said to be $\eta$-Hermitian if $A=A^{\eta*},$ where $A^{\eta*}=-\eta A^{*}\eta$.
\end{definition}

\begin{theorem}\label{theorem061}
The system (\ref{may29equ061}) has an $\eta$-Hermitian solution if and only if the following $k(k+1)$ rank equalities hold for all $i=1,\ldots,k$ and $1\leq m< n\leq k$
\begin{align}\label{may29equ062}
r\begin{pmatrix}
A_{i}&E_{i}&C_{i}
\end{pmatrix}=r\begin{pmatrix}
A_{i}&C_{i}
\end{pmatrix},
~
r\begin{pmatrix}
A_{i}&E_{i}\\
0&C^{\eta*}_{i}
\end{pmatrix}=r(A_{i})+r(C_{i}),
\end{align}
\begin{align}\label{may29equ063}
&r\begin{pmatrix}\begin{smallmatrix}
A_{m}&E_{m}&C_{m}\\
 &C^{\eta*}_{m}& &A^{\eta*}_{m+1}\\
 & & A_{m+1}&-E_{m+1}&C_{m+1}\\
  & & & \ddots & \ddots & \ddots\\
  & & & & A_{n}&(-1)^{n-m}E_{n}&C_{n}\end{smallmatrix}
\end{pmatrix}\nonumber\\&=r
\begin{pmatrix}\begin{smallmatrix}
A_{m}&C_{m}\\
& A_{m+1}&C_{m+1}\\
& & \ddots&\ddots\\
& & & A_{n}&C_{n}\end{smallmatrix}
\end{pmatrix}+r
\begin{pmatrix}\begin{smallmatrix}
C_{m}\\
 A_{m+1}&C_{m+1}\\
 &A_{m+2}&\ddots\\
 & &\ddots&C_{n-1}\\
 & & &A_{n}\end{smallmatrix}
\end{pmatrix},
\end{align}
\begin{align}\label{may29equ064}
&r
\begin{pmatrix}\begin{smallmatrix}
A_{m}&E_{m}&C_{m}\\
&C^{\eta*}_{m}& &A^{\eta*}_{m+1}\\
& &A_{m+1}&-E_{m+1}&\ddots\\
& & & C^{\eta*}_{m+1}& \ddots & A^{\eta*}_{n}\\
&  & & &\ddots&(-1)^{n-m}E_{n}\\
&  & & & & C^{\eta*}_{n}\end{smallmatrix}
\end{pmatrix}\nonumber\\&=r
\begin{pmatrix}\begin{smallmatrix}
A_{m}&C_{m}\\
& A_{m+1}&C_{m+1}\\
& & \ddots&\ddots\\
& & & A_{n-1}&C_{n-1}\\
& & & & A_{n}\end{smallmatrix}
\end{pmatrix}+r
\begin{pmatrix}\begin{smallmatrix}
C_{m}\\
A_{m+1}&C_{m+1}\\
&A_{m+2}&C_{m+2}\\
&&\ddots &\ddots\\
& & & A_{n}&C_{n}\end{smallmatrix}
\end{pmatrix},
\end{align}
where the blank entries in (\ref{may29equ062})-(\ref{may29equ064}) are all zeros.
\end{theorem}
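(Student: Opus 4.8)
The plan is to derive Theorem \ref{theorem061} from Theorem \ref{maintheorem}. Regard the system (\ref{may29equ061}) as an instance of (\ref{march15equ031}) with $B_i=A_i^{\eta*}$ and $D_i=C_i^{\eta*}$. First I would note that the existence of an $\eta$-Hermitian solution is equivalent to mere consistency of this instance: one direction is trivial, and for the other, if $(X_1,\dots,X_{k+1})$ is any solution then, applying the rank- and sum-preserving involutive anti-automorphism $Z\mapsto Z^{\eta*}$ to each equation, $(X_1^{\eta*},\dots,X_{k+1}^{\eta*})$ is again a solution (here one uses $(A_iX_iA_i^{\eta*})^{\eta*}=A_iX_i^{\eta*}A_i^{\eta*}$ and $E_i^{\eta*}=E_i$, the latter being forced by the existence of an $\eta$-Hermitian solution), whence $\big(\tfrac12(X_1+X_1^{\eta*}),\dots,\tfrac12(X_{k+1}+X_{k+1}^{\eta*})\big)$ is an $\eta$-Hermitian solution. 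Thus, assuming as we may that each $E_i$ is $\eta$-Hermitian, Theorem \ref{maintheorem} applies and tells us that (\ref{may29equ061}) has an $\eta$-Hermitian solution if and only if the $2k(k+1)$ rank equalities (\ref{march15equ032})--(\ref{march15equ037}) hold after the substitution $B_i\mapsto A_i^{\eta*}$, $D_i\mapsto C_i^{\eta*}$.

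The remaining task is to show that, under this substitution and with $E_i=E_i^{\eta*}$, these $2k(k+1)$ equalities reduce to the $k(k+1)$ equalities (\ref{may29equ062})--(\ref{may29equ064}). The mechanism is that $(\cdot)^{\eta*}$ preserves rank and, on a block matrix, transposes the block pattern while sending each block $Z$ to $Z^{\eta*}$. Concretely: the first equality of (\ref{march15equ032}) is the first equality of (\ref{may29equ062}); applying $(\cdot)^{\eta*}$ to the two matrices in the second equality of (\ref{march15equ032}) turns it (using $E_i^{\eta*}=E_i$) back into the first equality of (\ref{may29equ062}), so it is redundant. Similarly, the first equality of (\ref{march15equ033}) becomes exactly the second equality of (\ref{may29equ062}) (since $r(C_i^{\eta*})=r(C_i)$), and the second equality of (\ref{march15equ033}) is its $\eta$-conjugate transpose, hence redundant. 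For the coupled blocks, substituting into (\ref{march15equ034}) produces verbatim the left-hand matrix of (\ref{may29equ063}) and the first matrix on its right-hand side, while the second matrix on the right of (\ref{march15equ034}) becomes the bidiagonal array in $C_i^{\eta*},A_{i+1}^{\eta*}$, whose rank equals that of its $\eta$-conjugate transpose --- precisely the second matrix on the right of (\ref{may29equ063}); hence (\ref{march15equ034}) $\Longleftrightarrow$ (\ref{may29equ063}), and the same argument gives (\ref{march15equ036}) $\Longleftrightarrow$ (\ref{may29equ064}). Finally, applying $(\cdot)^{\eta*}$ blockwise to each matrix in (\ref{march15equ034}) (respectively (\ref{march15equ036})) after substitution reproduces, up to permutations of block rows and columns and sign changes of some rows and columns, the matrices in (\ref{march15equ035}) (respectively (\ref{march15equ037})), so the latter are redundant. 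Keeping two equalities per index $i$ and two per pair $1\le m<n\le k$ leaves $2k+2\binom{k}{2}=k(k+1)$ equalities, namely (\ref{may29equ062})--(\ref{may29equ064}).

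The step I expect to be the main obstacle is the verification in the previous paragraph that (\ref{march15equ035}) and (\ref{march15equ037}) are genuinely the $\eta$-conjugate transposes of (\ref{march15equ034}) and (\ref{march15equ036}): one must match, position by position, the block at $(p,q)$ of one with the $\eta$-conjugate transpose of the block at $(q,p)$ of the other, checking that the ``one-generation-taller'' shapes coincide and that the alternating signs on the $E_i$'s (the off-diagonal $-E_{m+1}$ and the terminal $(-1)^{n-m}E_n$) line up --- any residual sign being absorbed into a diagonal $\pm1$ scaling, which leaves the rank unchanged. Everything else is a routine combination of Lemma \ref{hlemma03}, the identities $r(Z^{\eta*})=r(Z)$, $(YZ)^{\eta*}=Z^{\eta*}Y^{\eta*}$, $(Z^{\eta*})^{\eta*}=Z$, and elementary block operations.
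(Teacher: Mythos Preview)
Your approach is essentially the same as the paper's: reduce the $\eta$-Hermitian problem to the unconstrained system via the averaging map $X_i\mapsto\tfrac12(X_i+X_i^{\eta*})$, then invoke Theorem~\ref{maintheorem} with $B_i=A_i^{\eta*}$, $D_i=C_i^{\eta*}$ and collapse the resulting $2k(k+1)$ rank equalities to $k(k+1)$ by recognising pairs related by $(\cdot)^{\eta*}$. The paper's proof is only a two-line sketch (``$X_i=\tfrac12(Y_i+Y_i^{\eta*})$; the conditions follow from Theorem~\ref{maintheorem}''), so your write-up in fact supplies the details the paper omits, including the observation that the hypothesis $E_i=E_i^{\eta*}$ is implicit.
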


\begin{proof}
The system (\ref{may29equ061}) has an $\eta$-Hermitian solution if and only if the following system is consistent
\begin{align*}
\begin{cases}
&A_{1}Y_{1}A^{\eta*}_{1}+C_{1}Y_{2}C^{\eta*}_{1}=E_{1},\\
&\qquad \qquad \quad A_{2}Y_{2}A^{\eta*}_{2}+C_{2}Y_{3}C^{\eta*}_{2}=E_{2},\\
&\qquad \qquad \quad \qquad \qquad \quad A_{3}Y_{3}A^{\eta*}_{3}+C_{3}Y_{4}C^{\eta*}_{3}=E_{3},\\
&\qquad \qquad \quad \qquad \qquad\qquad\qquad\qquad\ddots\\
&\qquad \qquad \quad \qquad \qquad\qquad\qquad\qquad A_{k}Y_{k}A^{\eta*}_{k}+C_{k}Y_{k+1}C^{\eta*}_{k}=E_{k}.
\end{cases}
\end{align*}In this case, the general $\eta$-Hermitian solution to the system (\ref{may29equ061}) can be expressed as
\begin{align*}
X_{i}=\frac{Y_{i}+Y^{\eta*}_{i}}{2}.
\end{align*} The solvability conditions (\ref{may29equ062})-(\ref{may29equ064}) can be obtained by using Theorem \ref{maintheorem}.
\end{proof}

\end{document}